\newcommand{\Trop}{\text{trop\,}}
\newcommand{\tconv}{\text{tconv\,}}
\newtheorem{thm}{Theorem}
\newtheorem{prop}[thm]{Proposition}
\newtheorem{conj}[thm]{Conjecture}
\newcommand{\RR}{\mathbb{R}}
\newcommand{\R}{\mathbb{R}}
\DeclareMathOperator*{\argmax}{arg\,max}
\begin{document}

\title{Tree Topologies along a Tropical Line Segment\thanks{R.Y.~is partially supported by NSF (DMS 1916037)}
}


\author{Ruriko Yoshida  \and Shelby Cox   
}


\institute{R. Yoshida \at
            Naval Postgraduate School\\
              1411 Cunningham Road\\
Naval Postgraduate School\\
Monterey, CA 93943-5219\\
              Tel.: +1-831-656-2973\\
              Fax: +1-831-656-2595\\
              \email{ryoshida@nps.edu}           
           \and
           S. Cox \at
            University of Michigan\\
           2074 East Hall\\
            530 Church Street\\
            Ann Arbor, MI 48109-1043 \\
}

\date{Received: date / Accepted: date}

\maketitle

\begin{abstract}
Tropical geometry with the max-plus algebra has been applied to statistical learning models over tree spaces because geometry with the tropical metric over tree spaces has some nice properties such as convexity in terms of the tropical metric.  One of the challenges in applications of tropical geometry to tree spaces is the difficulty interpreting outcomes of statistical models with the tropical metric.  This paper focuses on combinatorics of tree topologies along a tropical line segment, an intrinsic geodesic with the tropical metric, between two phylogenetic trees over the tree space and we show some properties of a tropical line segment between two trees.  Specifically we show that a probability of a tropical line segment of two randomly chosen trees going through the origin (the star tree) is zero if the number of leave is greater than four, and we also show that if two given trees differ only one nearest neighbor interchange (NNI) move, then the tree topology of a tree in the tropical line segment between them is the same tree topology of one of these given two trees with  possible zero branch lengths. 
\keywords{Phylogenetic trees \and Phylogenomics \and Tree Spaces \and Ultrametrics}
\end{abstract}

\section{Introduction}

Due to the increasing amount of data today, data science is one of the most exciting fields in science.  It finds applications in statistics, computer science, business, biology, data security, physics, and so on. 
Most statistical models in data sciences assume that data points in an input sample are distributed over a Euclidean space if they have numerical measurements.  However, in some cases this assumption can fail.  For example, a {\em space of phylogenetic trees} with a fixed set of leaves is a union of lower dimensional cones over $\mathbb{R}^e$, where $e = \binom{n}{2}$ with $n$ as the number of leaves \cite{AK}.  Since the space of phylogenetic trees is a union of lower dimensional cones, we cannot just apply statistical models in data science to a set of phylogenetic trees \cite{YZZ}.  

There has been much work in spaces of phylogenetic trees. In 2001, Billera-Holmes-Vogtman (BHV) developed the notion of a space of phylogenetic trees with a fixed set of labels for leaves \cite{BHV}, which is a set of all possible unrooted phylogenetic trees with the fixed set of labels on leaves and which is a union of orthants; each orthant contains possible unrooted phylogenetic trees with a fixed tree topology.  
They also showed that this space is ${\rm CAT}(0)$ space so that there is a unique 
shortest connecting path, or geodesic, between any two points in the space defined by the ${\rm CAT}(0)$-metric.   We can also generalize this tree space to the space of rooted phylogenetic trees with a given set of leaves.  

There is some work in development on machine learning models with the BHV metric.  For example, Nye defined the notion of the first order
principal component geodesic as the unique geodesic with the BHV metric over the tree space which minimizes the sum of residuals between the geodesic and each data point \cite{Nye}.  However, we cannot use a convex hull under the BHV metric for higher principal components because Lin et al.~showed that the convex hull of three points with the BHV metric over the tree space can have arbitrarily high dimension \cite{LSTY}.

Another space of phylogenetic trees with a given set of leaves is the edge-product space \cite{10.2307/23238597,10.1093/sysbio/syx080}.  Metrics defined over the edge-product space are associated with probability distributions on characters and the Hellinger and Jensen–Shannon metrics are used between two distributions over the edge-product space with a given set of leaves \cite{10.1093/sysbio/syx080}.  This space is also well studied from the view of algebraic geometry (for example, \cite{10.2307/23238597}). Since the edge-product space is based on distributions on a set of characters to represent phylogenetic trees with a given set of leaves, it is natural to conduct statistical analysis over such tree spaces using information geometry.  For more details,  Garba et al.~summarize these three tree spaces in a recent their work \cite{Nye2021} and they extended the edge-product space to a new tree space called the {\em Wald space}. 

In 2004, Speyer and Sturmfels showed a space of phylogenetic trees with a given set of labels on their leaves is a tropical Grassmannian \cite{SS}, which is a tropicalization of a linear space defined by a set of linear equations \cite{YZZ} with the max-plus algebra.  It is important to note that the tree space defined by Speyer and Sturmfels is not isometric to the tree space defined by Billera-Holmes-Vogtman although they are homeomorphic to each other. The first attempt to apply tropical geometry to computational biology and statistical models was done by Pachter and Sturmfels \cite{PS}. 
The tropical metric with the max-plus algebra on the tree space is known to behave very well \cite{AGNS,CGQ}.  For example, contrary to the BHV metric, the dimension of the convex hull of $s$ tropical points is at most $s-1$ \cite{LSTY}.  
There has been much work done with the tropical metric over the tree space of equidistant trees to analyze a set of phylogenetic trees with $n$ leaves $X = \{1, 2, \ldots , n\}$.  For example, Yoshida et al.~defined tropical principal component analysis (PCA) with the tropical metric over the space of {\em equidistant trees} to reduce dimensionality and to visualize data sets \cite{YZZ}.  Also Tang et al.~developed hard and soft tropical support vector machines (SVMs) and the authors applied them to classifying sets of equidistant trees \cite{TWY}.  For more details on applications of tropical geometry to tree spaces, see \cite{Yoshida2}.
  

One of the challenges in statistical learning models with the tropical metric over the space of equidistant trees is the difficulty to interpret outputs from such methods.  For example, the principal geodesic developed by Nye in \cite{Nye} has a natural interpretation of the geodesic with the BHV metric over the space of phylogenetic trees with $n$ leaves.  However, it is not obvious how to interpret a tropical principal polytope developed by Yoshida et al.~in \cite{YZZ,10.1093/bioinformatics/btaa564}. Interpretation of the output from a statistical learning model is one of the most important processes in data analysis.  Therefore, this paper focuses on the interpretation of a tropical ``geodesic'' with the tropical metric on the space of equidistant trees with $n$ leaves.  However, a tropical geodesic between two equidistant trees is known to be not unique (e.g., see \cite{anthea}). In fact, there are infinitely many tropical geodesics between two points.  This makes it difficult to analyze behavior of a tropical geodesic between trees, thus in this paper we consider a {\em tropical line segment} between trees which is intrinsic and unique on the space of equidistant trees \cite{anthea}. 
Thus, here we use a tropical line segment between two equidistant trees as a tropical geodesic between them. 

In this paper, we focus on rooted phylogenetic trees with $n$ leaves.  More specifically we focus on {\em equidistant trees}, rooted phylogenetic trees whose total branch lengths from the root to each leaf are the same for all leaves.  It is important to note that the tree spaces defined by Billera-Holmes-Vogtmann \cite{BHV},  Speyer-Sturmfels \cite{SS}, and the edge-product space with a distribution based metric \cite{10.1093/sysbio/syx080} can be applied to a space of rooted phylogenetic trees, but they do not assume equidistant trees.

Among these three tree spaces: the BHV space; the edge-product space; and the tree space with the tropical metric, a tree space with the tropical metric has the least attention because the geodesic between trees with the tropical metric is not unique and also is hard to  interpret in terms of tree topologies. Therefore we focus on combinatrics of tree topologies on the geodesic between trees with the tropical metric, especially {\em tropical line segment} between trees, which is a unique geodesic in terms of the tropical metric. 
 Monod et al. investigated tree topologies along a tropical line segment over the space of ultrametrics and characterizes symmetry of tree topologies on a tropical line segment in \cite{LMY2}. 
Therefore, we investigate explicitly how tree topologies change over a tropical line segment for some specific cases.  Specifically, we show that the probability of a tropical line segment of two randomly chosen trees going through the origin (the star tree) is zero if the number of leaves is greater than four.  In addition, we also show that if two given trees differ by only one nearest neighbor interchange (NNI) move, then the tree topology of a tree in the tropical line segment between them is the same tree topology of one of these two given trees with  possible zero branch lengths.
We end this paper with a conjecture that tree topologies of trees on a tropical line segment change by a sequence of NNI moves. 
Through this paper we propose open problems to understand combinatorics of tree topologies along a tropical line segment between equidistant trees. 

\section{Notation and Definitions}

\subsection{Tropical Basics}

In the tropical semiring $(\,\R \cup \{-\infty\},\oplus,\odot)\,$, we define the basic
operations of addition and multiplication as:
$$a \oplus b := \max\{a, b\}, ~~~~ a \odot b := a + b ~~~~\text{  where } a, b \in \R.$$
In this semiring, the identity element for addition is $-\infty$ and $0$ is the identity element for multiplication.  An essential feature of tropical arithmetic is that there is no subtraction. 
Tropical division is defined to be classical subtraction, so $(\,\R \cup \{-\infty\},\oplus,\odot)\,$ satisfies all ring axioms (and indeed field axioms) except for the existence of an additive inverse. In tropical geometry, we work on the \emph{tropical projective torus}, $\mathbb{R}^e \!/\mathbb R {\bf 1}$, where ${\bf 1}$ denotes the all-ones vector, i.e., for any $x=(x_1, \ldots , x_e) \in \mathbb{R}^e \!/\mathbb R {\bf 1},$
\[
(x_1, \ldots , x_e) = (x_1 + c, \ldots , x_e + c) ,
\] for any constant $c \in \RR$. 

\begin{definition}
Over the tropical semiring $(\,\R \cup \{-\infty\},\oplus,\odot)\,$, suppose $u = (u_1, \ldots , u_e),\, v = (v_1, \ldots , v_e)$ are in the tropical projective space $\mathbb{R}^e / \mathbb{R} {\bf 1}$. Then the tropical metric $d_{\rm tr}$ is defined as
\[
d_{\rm tr}(u, v) = \max_i\{u_i - v_i\} - \min_i\{u_i - v_i\}.
\]
\end{definition}

\begin{definition}[Tropical Convex Hull]\label{def:polytope}
The {\em tropical convex hull} or {\em tropical polytope} of a given finite subset $V = \{v^1, \ldots , v^s\}\subset \mathbb R^e \!/\mathbb R {\bf   1}$ is the smallest tropically-convex subset containing $V \subset \mathbb R^e \!/\mathbb R {\bf  1}$: it is written as the set of all tropical linear combinations of $V$ such that:
  $$ \mathrm{tconv}(V) = \{a_1 \odot v^1 \oplus a_2 \odot v^2 \oplus \cdots \oplus a_s \odot v^s \mid  a_1,\ldots,a_s \in \R \}.
$$
A {\em tropical line segment} between two points $v^1, \, v^2$ is the tropical polytope of $\{v^1, \, v^2\}$.
\end{definition}
\begin{example}\label{ex:lineseg1}
Suppose we have two vectors 
\[
v^1 = (0, 0, 0), \, v^2 = (0, 3, 1),
\]
over $\mathbb R^3 \!/\mathbb R {\bf 1}$. Then we consider the tropical line segment between $v^1$ and $v^2$, that is
$$ \mathrm{tconv}(V) = \{a_1 \odot v^1 \oplus a_2 \odot v^2  \mid  a_1, a_2 \in \R \}.
$$
Note that 
\[
a_1 \odot v^1 = (a_1 + 0, a_1 + 0, a_1 + 0) = (0, 0, 0) = v^1,
\]
and 
\[
a_2 \odot v^2 = (a_2 + 0, a_2 + 3, a_2 + 1) = (0, 3, 1) = v^2.
\]
Also 
\[
\begin{array}{rl}
& a_1 \odot v^1 \oplus a_2 \odot v^2  \\
= & (\max\{a_1 + 0, a_2 + 0\}, \max\{a_1 + 0, a_2 + 3\}, \max\{a_1 + 0, a_2 + 1\})\\
=& (\max\{a_1, a_2\}, \max\{a_1, a_2 + 3\}, \max\{a_1, a_2 + 1\}).
\end{array}
\]

When we have $a_1 \geq a_2 + 3$, then 
\[
\begin{array}{rl}
& a_1 \odot v^1 \oplus a_2 \odot v^2 \\
=& (\max\{a_1, a_2\}, \max\{a_1, a_2 + 3\}, \max\{a_1, a_2 + 1\})\\
=& (a_1, a_1, a_1)\\
=& v^1 .
\end{array}
\]

When we have $a_2+1 \leq a_1 < a_2+3$, then
\[
\begin{array}{rl}
& a_1 \odot v^1 \oplus a_2 \odot v^2  \\
=& (\max\{a_1, a_2\}, \max\{a_1, a_2 + 3\}, \max\{a_1, a_2 + 1\})\\
=& (a_1, a_2+3, a_1)\\
=& (0, a_2+3 - a_1, 0) \\
=& (0, l, 0),\\
\end{array}
\]
where $l = a_2+3 - a_1$ for $0 < l < 2$.

When we have $a_2 \leq a_1 < a_2+1$, then
\[
\begin{array}{rl}
& a_1 \odot v^1 \oplus a_2 \odot v^2  \\
=& (\max\{a_1, a_2\}, \max\{a_1, a_2 + 3\}, \max\{a_1, a_2 + 1\})\\
=& (a_1, a_2+3, a_2 + 1)\\
=& (0, a_2+3 - a_1, a_2 + 1 - a_1) \\
=& (0, l + 2, l),\\
\end{array}
\]
where $l = a_2+3 - a_1 + 1$ for $0 < l < 1$.
Therefore the tropical line segment $v^1$ and $v^2$ are the line segments from $v^1$ to $(0, 2, 0)$ and from $(0, 2, 0)$ to $v^2$.
\end{example}

Let $\Gamma_{u, v}$ be a tropical line segment between two ultrametrics $u, \, v \in \mathcal{U}_n$. 

\begin{example}
Suppose we have a set $ V = \left\{v^1, \, v^2, \, v^3\right\} \subset \mathbb R^3 \!/\mathbb R {\bf 1}$ where
\[
v^1 = (0, 0, 0), \, v^2 = (0, 3, 1), \, v^3 = (0, 2, 5).
\]

First, we compute the tropical line segment $\Gamma_{v^1, v^2}$ as Example \ref{ex:lineseg1} shows.  Then, 
similarly we can compute the tropical line segment $\Gamma_{v^2, v^3}$ which is a line segment of
\[
v^2 = (0, 3, 1), \, (0, 3, 5), \, v^3 = (0, 2, 5),
\]
and the tropical line segment $\Gamma_{v^1, v^3}$ is a line segment of
\[
v^1 = (0, 0, 0), \, (0, 0, 3), \, v^3 = (0, 2, 5).
\]
The tropical polytope $\mathrm{tconv}(V)$ of $V$ is shown in Fig. \ref{fig:tropPoly}.
 \begin{figure}
 \centering
\includegraphics[width=0.5\textwidth]{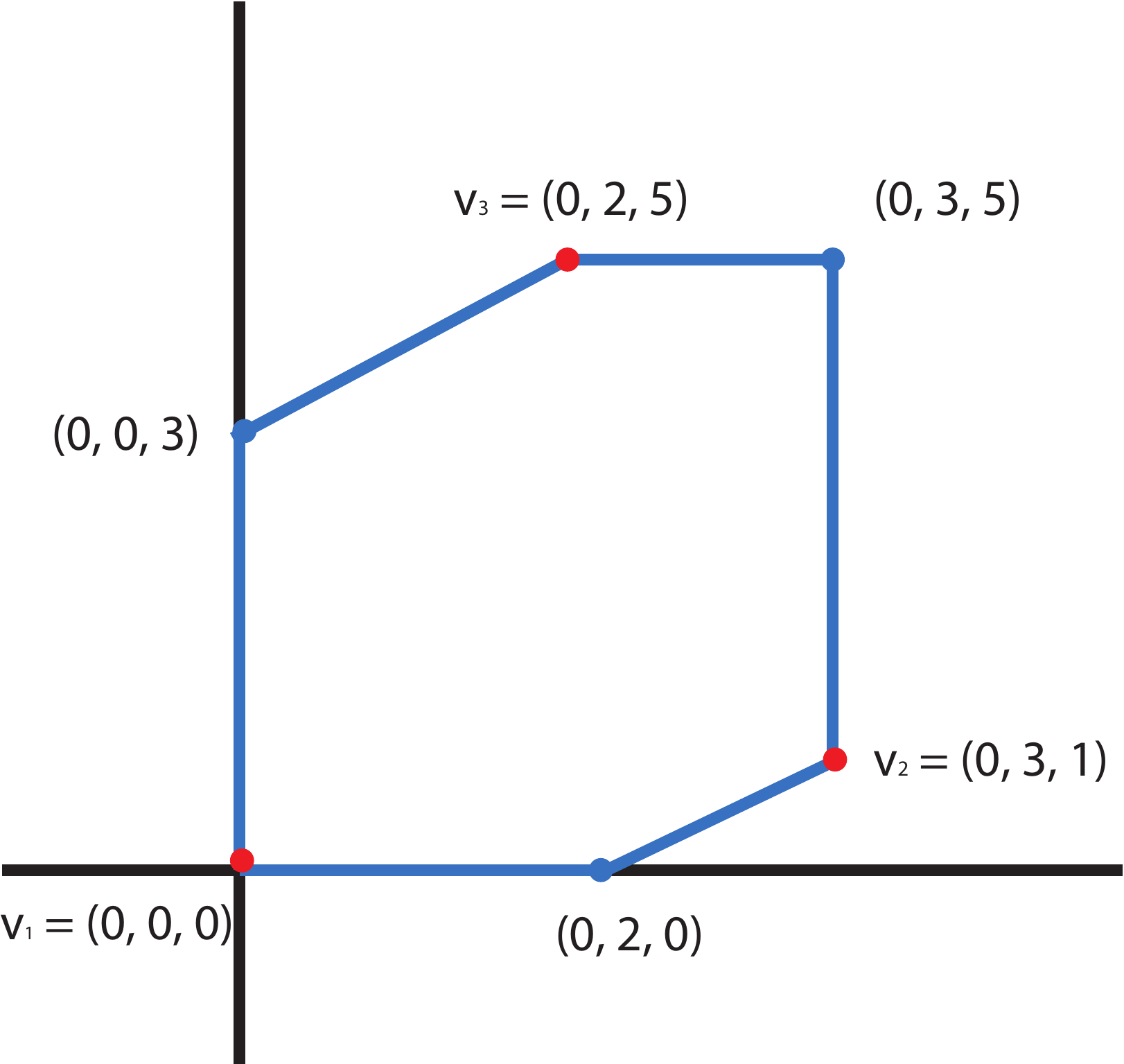}
\caption{Tropical polytope of three points $(0, 0, 0), \, (0, 3, 1), \, (0, 2, 5)$ in $\mathbb R^3 \!/\mathbb R {\bf 1}$.}
\label{fig:tropPoly}
\end{figure}
\end{example}

\subsection{Space of Ultrametrics}

A phylogenetic tree is a weighted tree with labels $X = \{1, \ldots , n\}$ on its leaves and its internal nodes do not have any labels.  Each edge on a phylogenetic tree has non-negative weight which represents evolutionary time and mutation rates. A phylogenetic tree can be rooted or unrooted.  Throughout this paper we assume that all phylogenetic trees are rooted.  Also we assume that all phylogenetic trees are equidistant trees, that is, rooted phylogenetic trees with the property that the distance from the root to each leaf is the same for all leaves and all trees have the same height.  This is the same assumption of the multispecies coalescent model \cite{coalescent}, one of the most popular models to model gene trees under the species tree.

A dissimilarity map is a map $u_{ij}: X \times X \to \mathbb{R}_+$ such that
\[
u_{ij} = \begin{cases}
u_{ji} & \mbox{if } i \not = j\\
0 & \mbox{if } i = j,
\end{cases}
\]
for any $i, j \in X$. 
Suppose $u = (u_{12}, \ldots , u_{n-1 n})$ are dissimilarity maps.  Then, note that $u$ is a {\em metric} if $u$ satisfies the triangle inequality.  If there exists a phylogenetic tree $T$ with the leaf label $X$ such that $u_{ij}$ is a pairwise distance from a leaf $i \in X$ to a leaf $j \in X$, then we call $u$ a {\em tree metric}.


\begin{definition}
$u := (u_{12}, u_{13}, \ldots , u_{n-1 n})$ is called an {\em ultrametric}  if 
\[
\max \{u_{ij}, \, u_{ik}, \, u_{jk}\}
\]
for distinct $i, \, j, \, k \in X= \{1, 2, \ldots n\}$ is achieved at least twice.  
\end{definition}

It is well-known that 
$u = (u_{12}, u_{13}, \ldots , u_{n-1 n})$ is an ultrametric if and only if $u = (u_{12}, u_{13}, \ldots , u_{n-1 n})$ is a tree metric with an equidistant tree $T$ \cite{anthea}.  In phylogenetics it is called {\em the three point condition} \cite{buneman1974note}. Therefore, here we work on the space of ultrametrics as a space of equidistant trees with $n$ leaves.  Let $\mathcal{U}_n$ denote the space of all ultrametrics in the tropical projective space $\mathbb R^e/{\bf 1}\mathbb R$ where $e = \binom{n}{2}$.

Note that $\mathcal{U}_n$ can be considered as the space of equidistant trees since for each equidistant tree there is a unique ultrametric to define the equidistant tree.  

Let $L_n$ be the subspace of $\mathbb R^e$ defined by the linear equations $x_{ij} - x_{ik} + x_{jk}=0$ for $1\leq i < j <k \leq n$, where $x_{ij}, x_{ik}, x_{jk}$ are variables. The tropicalization $\Trop(L_n)\subseteq \RR^e/\RR {\bf 1}$ is the tropical linear space consisting of points $(v_{12},v_{13},\ldots, v_{n-1,n})$ such that $\max(v_{ij},v_{ik},v_{jk})$ is obtained at least twice for all triples $i,j,k\in \{1, \ldots , n\}$.  Then we have the following theorem.  
\begin{thm}[Theorem 3 in \cite{YZZ}]
\label{ultrametrics}
The image of $\mathcal{U}_n$ in $\RR^e/\RR {\bf 1}$ is $\Trop(L_n)$.
\end{thm}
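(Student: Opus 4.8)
The plan is to prove the stated set equality by a double inclusion between the image of $\mathcal{U}_n$ and $\Trop(L_n)$, exploiting the fact that both sets are governed by exactly the same combinatorial condition: the \emph{three-point condition} asserting that, for every triple of distinct indices $i,j,k$, the maximum among $\{v_{ij}, v_{ik}, v_{jk}\}$ is attained at least twice. Since the membership description of $\Trop(L_n)$ recalled just before the statement is phrased in precisely these terms, and the definition of an ultrametric is phrased identically, the content of the theorem is really the reconciliation of these two descriptions across the quotient by $\mathbb{R}\mathbf{1}$.

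For the inclusion of the image of $\mathcal{U}_n$ into $\Trop(L_n)$, I would take an arbitrary ultrametric $u$ and simply observe that, by the definition of ultrametric given above, $\max\{u_{ij}, u_{ik}, u_{jk}\}$ is attained at least twice for each distinct triple. This is verbatim the membership condition for $\Trop(L_n)$, so the class $[u]$ lies in $\Trop(L_n)$ with no further work.

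For the reverse inclusion, which is the substantive direction, I would start from a point $v = (v_{12}, \ldots, v_{n-1,n}) \in \Trop(L_n)$ and manufacture an honest representative of the class $[v]$ that lies in $\mathcal{U}_n$. Choosing a constant $c > -\min_{i<j} v_{ij}$ and setting $u_{ij} := v_{ij} + c$ (with $u_{ji} = u_{ij}$ and $u_{ii} = 0$) produces a non-negative dissimilarity map with $[u] = [v]$ in $\mathbb{R}^e/\mathbb{R}\mathbf{1}$. Because the three coordinates attached to any triple are all shifted by the same constant $c$, the property that the maximum is attained at least twice is preserved exactly, so $u$ satisfies the three-point condition and is therefore an ultrametric. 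Invoking the well-known equivalence recalled in the excerpt (the three-point condition holds if and only if $u$ is the tree metric of an equidistant tree), I would conclude $u \in \mathcal{U}_n$, and hence $[v]$ lies in the image.

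The only delicate point — and the step I expect to be the main obstacle — is the bookkeeping around the quotient by $\mathbb{R}\mathbf{1}$ together with the diagonal convention $u_{ii} = 0$: one must verify that passing to the tropical projective torus neither destroys nor spuriously creates the three-point structure. The shift-invariance argument above handles this cleanly, since the condition involves only distinct indices and is insensitive to adding a multiple of $\mathbf{1}$, and the non-negativity required of a dissimilarity map can always be arranged within the equivalence class. If instead one insisted on treating $\Trop(L_n)$ as the genuine tropicalization (coordinatewise valuations of points of $L_n$ over a valued field) rather than through the combinatorial description supplied before the statement, the additional labor would be to show that the three-term forms $x_{ij} - x_{ik} + x_{jk}$ constitute a tropical basis for $L_n$; but since that combinatorial description is given in the excerpt, I would cite it and avoid re-deriving the tropical basis property.
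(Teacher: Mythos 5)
The paper itself does not prove this statement: it is imported as Theorem~3 of \cite{YZZ}, so there is no in-house argument to compare against, and your proposal has to be judged on its own terms. Read against the combinatorial description of $\Trop(L_n)$ that the paper states immediately before the theorem, your double inclusion is correct. The forward inclusion is definition-chasing, and your constant shift $u_{ij} := v_{ij} + c$ with $c > -\min_{i<j} v_{ij}$ correctly disposes of the only genuine issue in the reverse inclusion, namely that an ultrametric is required to be a non-negative dissimilarity map while a class in $\RR^e/\RR\mathbf{1}$ need not have a non-negative representative a priori; since the doubly-attained-maximum condition for each triple is invariant under adding $c\mathbf{1}$, the shifted point satisfies the three-point condition and hence corresponds to an equidistant tree.

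The caveat you raise in your last sentence is, however, where the entire mathematical content of the theorem lives. If $\Trop(L_n)$ is taken to mean the actual tropicalization of the linear space $L_n$ --- which is what the notation and the word ``tropicalization'' mean in \cite{YZZ} --- then the identification of $\Trop(L_n)$ with the set of points at which each $\max(v_{ij},v_{ik},v_{jk})$ is attained twice is precisely what must be proved: it is the assertion that the three-term circuits $x_{ij}-x_{ik}+x_{jk}$ form a tropical basis, which in \cite{YZZ} rests on the Ardila--Klivans identification of this Bergman fan with the space of equidistant trees \cite{AK}. By citing the in-text description you defer exactly that step, so what remains in your write-up is a correct but essentially definitional unwinding plus the normalization argument. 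That is a defensible reading of how this paper presents the theorem, but you should be aware that you have not reproved the substantive part of the cited result, only the translation between ``ultrametric'' and the projective combinatorial condition.
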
 
Therefore, we can think of $\mathcal{U}_n$  as a tropical linear space.  
Also note that $\mathcal{U}_n$ is a tropical linear space over the tropical projective space.  Therefore $\mathcal{U}_n$ is {\rm tropically convex}.  Thus, if we take any two points $u, \, v \in \mathcal{U}_n$ then the tropical intrinsic geodesic between $u$ and $v$ is in $\mathcal{U}_n$.  This means that all points in a tropical intrinsic geodesic between $u, \, v \in \mathcal{U}_n$ are ultrametrics and there are equidistant trees associated with these ultrametrics.  
These leads to the following problem:
Suppose we have two equidistant trees $T_u, \, T_v$ with their ultrametrics $u, \, v \in \mathcal{U}_n$, respectively.  Then how do tree topologies change along a tropical intrinsic geodesic between ultrametrics $u, \, v$?

This is one of the most important questions in order to develop data science models using the tropical metric over $\mathcal{U}_n$ since this will answer the {\em interpretation} of results from a model with the tropical metric.  For example, the output from the tropical principal component analysis (PCA) is not obvious to interpret. 

It is very important to note that tropical geodesics between two points are {\em not} unique.   For example, we consider two points from Example \ref{ex:lineseg1}.  Then we have the distance over the tropical line segment between $v_1 = (0, 0, 0)$ and $v_2 = (0, 3, 1)$ is $d_{\rm tr}(v_1, (0, 2, 0)) + d_{\rm tr}((0, 2, 0), v_2) = 2 + 1 = 3$.  But the straight line from $v_1$ to $v_2$ is also a tropical geodesic since $d_{\rm tr}(v_1, v_2) = 3$.  

However, if we consider a {\em tropical line segment} defined in Definition \ref{def:polytope} between two ultrametrics $u, \, v \in \mathcal{U}_n$ associated with equidistant trees, that is, a {\em tropical polytope} generated by $u, \, v \in \mathcal{U}_n$, then the tropical line segment is unique.  
Let $\Gamma_{u, v}$ be a tropical line segment between two ultrametrics $u, \, v \in \mathcal{U}_n$. 
\begin{prop}
A tropical line segment  $\Gamma_{u, v}$ of $u, \, v \in \mathbb{R}^e \!/\mathbb R {\bf 1}$ is unique, it is a geodesic  in $\mathcal{U}_n$, and it is intrinsic.  
\end{prop}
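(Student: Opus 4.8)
The plan is to reduce all three assertions to the explicit piecewise-linear parametrization of $\tconv\{u,v\}$ and then verify each property in turn. First I would record that, working in $\mathbb{R}^e/\mathbb{R}\mathbf{1}$, every tropical linear combination $a_1 \odot u \oplus a_2 \odot v$ is projectively equal to $\gamma(\lambda) := \lambda \odot u \oplus v$ with $\lambda = a_1 - a_2$, whose $i$-th coordinate is $\gamma(\lambda)_i = \max\{\lambda + u_i,\, v_i\}$. Hence $\Gamma_{u,v} = \tconv\{u,v\} = \{\gamma(\lambda) : \lambda \in \mathbb{R}\}$ is exactly the image of this single map. This immediately gives \textbf{uniqueness}: the set $\tconv\{u,v\}$ is determined by $u$ and $v$ alone through Definition \ref{def:polytope}, with no auxiliary choices, so $\Gamma_{u,v}$ is a well-defined, unique object (in contrast to the many competing tropical geodesics discussed above).

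For the \textbf{geodesic} claim I would analyze the path $\gamma$. Each coordinate $\lambda \mapsto \max\{\lambda+u_i, v_i\}$ is nondecreasing, is constant ($=v_i$) for $\lambda \le v_i - u_i$ and has slope one for $\lambda \ge v_i - u_i$, so $\gamma$ is a concatenation of ordinary segments whose breakpoints occur at the sorted values $s_1 \le \cdots \le s_e$ of $\{v_i - u_i\}$; moreover $\gamma \equiv v$ for $\lambda \le s_1$ and $\gamma \equiv u$ for $\lambda \ge s_e$. The key computation is that for $s_1 \le \lambda_1 < \lambda_2 \le s_e$ every coordinate difference satisfies $0 \le \gamma(\lambda_2)_i - \gamma(\lambda_1)_i \le \lambda_2 - \lambda_1$, with the lower bound attained by the coordinate realizing $s_e$ (not yet switched) and the upper bound by the coordinate realizing $s_1$ (already switched); hence $d_{\rm tr}(\gamma(\lambda_1), \gamma(\lambda_2)) = \lambda_2 - \lambda_1$. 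This shows the tropical distance is additive along $\gamma$, so the length of $\Gamma_{u,v}$ equals $s_e - s_1 = \max_i(v_i-u_i) - \min_i(v_i-u_i) = d_{\rm tr}(u,v)$. Since the triangle inequality forces any path from $u$ to $v$ to have length at least $d_{\rm tr}(u,v)$, the segment $\Gamma_{u,v}$ is a shortest path, i.e.\ a geodesic.

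Finally, \textbf{intrinsicness} follows from tropical convexity of the ambient space. By Theorem \ref{ultrametrics}, $\mathcal{U}_n$ coincides with $\Trop(L_n)$, which is a tropical linear space and in particular tropically convex. Therefore, whenever $u, v \in \mathcal{U}_n$, the tropical convex hull $\Gamma_{u,v} = \tconv\{u,v\}$ is contained in $\mathcal{U}_n$, so the geodesic never leaves the space of ultrametrics and is thus intrinsic; combining this with the previous paragraph, the intrinsic distance in $\mathcal{U}_n$ agrees with the ambient $d_{\rm tr}(u,v)$ and is realized by $\Gamma_{u,v}$.

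I expect the main obstacle to be the geodesic step, and specifically the bookkeeping in the additivity estimate: one must check that throughout the interior of the parameter range there is simultaneously a coordinate that has already switched to slope one and a coordinate still frozen at its $v$-value, so that the maximum and minimum coordinate differences are pinned to $\lambda_2 - \lambda_1$ and $0$ respectively. Care is also needed at the endpoints $\lambda = s_1, s_e$, where the extremal coordinates sit exactly on their breakpoints, and to phrase everything correctly modulo $\mathbb{R}\mathbf{1}$. The uniqueness and intrinsic assertions, by contrast, are essentially immediate from Definition \ref{def:polytope} and Theorem \ref{ultrametrics}.
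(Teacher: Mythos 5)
Your argument is correct, but it is considerably more self-contained than the paper's, which disposes of the proposition in two sentences: uniqueness is quoted from Proposition 24 of the cited reference (Lin et al.), and the geodesic and intrinsic claims are attributed jointly to the fact that $\mathcal{U}_n = \Trop(L_n)$ is a tropical linear space, hence tropically convex, together with the observation that $\Gamma_{u,v}$ is the tropical polytope of two points of $\mathcal{U}_n$. Your treatment of intrinsicness coincides with the paper's. Where you genuinely diverge is the geodesic step: instead of appealing to the reference, you reduce $\tconv\{u,v\}$ to the one-parameter path $\gamma(\lambda)_i = \max\{\lambda+u_i, v_i\}$ and verify additivity, $d_{\rm tr}(\gamma(\lambda_1),\gamma(\lambda_2)) = \lambda_2-\lambda_1$ on $[s_1,s_e]$, by pinning the extremal coordinate differences to $\lambda_2-\lambda_1$ (the coordinate realizing $\min_i(v_i-u_i)$, already on slope one) and to $0$ (the coordinate realizing $\max_i(v_i-u_i)$, still frozen at $v$); summing gives total length $d_{\rm tr}(u,v)$, which the triangle inequality shows is optimal. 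This computation is exactly the content that the paper's terse justification leaves implicit --- tropical convexity alone gives containment in $\mathcal{U}_n$ but not the metric statement --- so your version buys a complete, reference-free proof at the cost of the bookkeeping you flag. Your uniqueness argument (the convex hull of $\{u,v\}$ is a single well-defined set, with every combination $a_1\odot u \oplus a_2 \odot v$ collapsing to $\gamma(a_1-a_2)$ modulo $\mathbb{R}\mathbf{1}$) is a direct substitute for the citation and is adequate for the statement as phrased.
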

\begin{proof}
The first part of the statement is directly from Proposition 24 in \cite{anthea}.  The second and the third statements can be proven by the fact that $\mathcal{U}_n$ is a tropical linear space by Theorem \ref{ultrametrics} and a tropical line segment is a tropical polytope of two points in $\mathcal{U}_n$.
\end{proof}

Thus, we consider the following question:
\begin{problem}\label{prob1}
Suppose we have two equidistant trees $T_u, \, T_v$ with their ultrametrics $u, \, v \in \mathcal{U}_n$, respectively.  Then how do tree topologies change along the tropical line segment between ultrametrics $u, \, v$?
\end{problem}

We can generalize this question to a tropical polytope generated by finitely many ultrametrics $u^1, \, \ldots ,  u^k \in \mathcal{U}_n$.
\begin{problem}
Suppose we have $k$ equidistant trees $T_1, \, \ldots , T_k$ with their ultrametrics $u^1, \, \ldots , u^k \in \mathcal{U}_n$, respectively.  Then how do tree topologies change in the tropical polytope generated by ultrametrics $u^1,  \, \ldots ,  u^k$?
\end{problem}

In the paper by Page et al.~in \cite{10.1093/bioinformatics/btaa564}, we partially addressed this problem.  
\begin{definition}
\label{def:types}
Let $\mathcal P = \tconv(u^{1}, \dots, u^{k})\subseteq \mathbb R^{e}/\RR{\bf 1}$ be a tropical polytope. Each point $x = (x_1, \ldots , x_e)$ in $\mathbb R^{ e}/\RR{\bf 1}$ has a \emph{type} ${Q} = ({Q}_1,\dots, {Q}_{ e })$ according to $\mathcal P$, where an index $i$ is in ${Q}_j$ if
\[u^{i}_j - x_j = \max(u^{i}_1-x_1,\dots, u^{i}_{ e }-x_{ e }),\]
where $u^{i} = (u^i_1, \ldots , u^i_e)$ for $i = 1, \ldots , e$ and $j = 1, \ldots , e$.
The tropical polytope $\mathcal P$ consists of all points $x$ whose type ${Q} = ({Q}_1,\dots, {Q}_{e})$ has all ${Q}_i$ nonempty. Each collection of points with the same type is called a \emph{cell}.
\end{definition}
For more details on a cell of a tropical polytope, see \cite{10.1093/bioinformatics/btaa564}.

\begin{thm}[\cite{10.1093/bioinformatics/btaa564}]\label{thm:topology}
Let $\mathcal P = \tconv(u^{1}, \dots, u^{k})\subseteq \mathbb R^{n}/\mathbb{R}{\bf 1}$ be a tropical polytope spanned by ultrametrics. Then any two points $x$ and $y$ in the same cell of $\mathcal P$ are also ultrametrics with the same tree topology.
\end{thm}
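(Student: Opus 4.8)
The plan is to treat the two assertions separately. For the first, I would note that the generators $u^1,\dots,u^k$ are ultrametrics, so they lie in $\mathcal{U}_n$, and by Theorem~\ref{ultrametrics} the space $\mathcal{U}_n=\Trop(L_n)$ is a tropical linear space, hence tropically convex. Therefore $\mathcal P=\tconv(u^1,\dots,u^k)\subseteq\mathcal{U}_n$, and in particular every point of $\mathcal P$ — including $x$ and $y$ — is an ultrametric. This disposes of the ``also ultrametrics'' part, and it remains to show that $x$ and $y$ carry the same tree topology.

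For the second assertion I would first reduce ``same tree topology'' to a statement about nested partitions. For an ultrametric $z$ the three-point condition guarantees that, for each threshold $t$, the relation $i\sim_t j \iff z_{ij}\le t$ is an equivalence relation on $X$; its classes are the clusters of $T_z$ at height $t$, and the nested family of these clusters over all $t$ is exactly the tree topology of $z$. So it suffices to show that $x$ and $y$ induce the same nested family of clusters. I would do this by peeling off the coarsest split and recursing: the children of the root are the blocks in which $i$ and $j$ lie together iff $z_{ij}<M$, where $M=\max_p z_p$ is the largest coordinate, so the root-split is read off from the set $S(z)=\{p : z_p=M\}$ of maximum-attaining coordinates. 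If $S(z)$ is constant across the cell, then the root-split is constant; restricting to each root-child $C$ gives an ultrametric on $C$ whose coordinates are a subset of the original ones and inherit the (constant) type on the cell, so the same argument applied to $z|_C$ shows its root-split is constant as well, and iterating the recursion forces the entire nested family, hence the tree topology, to be constant.

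The heart of the proof is therefore the claim that, on a single cell, the set $S(z)$ of maximum-attaining coordinates is constant. To prove it I would use that a cell is relatively open, convex, and connected, so each coordinate is an affine-linear function of the parameters defining the cell; thus $S(z)$ can only change if some coordinate $z_{cd}$ climbs up to meet the running maximum $M$. But if $z_{cd}$ reaches $M=\max_\ell z_\ell$ at an interior point while being strictly below it nearby, then for every leaf $e$ the ultrametric three-point condition on $\{c,d,e\}$ forces $z_{ce}=M$ or $z_{de}=M$; I would argue that this forced coincidence changes the type $Q$ of Definition~\ref{def:types} (a new generator becomes tight at the relevant coordinate), so the point actually lies on a proper face of the cell rather than in its relative interior. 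Consequently $S(z)$ is locally constant on the interior and hence constant, which is what the recursion needs.

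The step I expect to be the main obstacle is precisely this claim that $S(z)$ is constant on a cell, equivalently that no coordinate crosses \emph{strictly above} the running maximum inside a cell. This is exactly where the ultrametric hypothesis is indispensable: for a tropical polytope on arbitrary generators the maximum-attaining set need not be constant on a cell, and even for ultrametric generators two coordinates belonging to different root-children genuinely can cross inside a single cell — this only reorders coalescence times and leaves the unranked topology intact, so it is harmless. The delicate point is thus to show that the only crossings a cell permits are these harmless ones between incomparable clusters, and never a crossing that adds or removes a coordinate from $S(z)$ at any level of the recursion. Making this precise amounts to matching the type-change locus of Definition~\ref{def:types} against the topology-change walls of $\mathcal{U}_n$, and that comparison is the technical core of the argument.
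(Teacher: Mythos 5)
This theorem is imported from \cite{10.1093/bioinformatics/btaa564}; the present paper states it without proof, so there is no in-paper argument to compare against and your proposal has to stand on its own. Its first half does stand: by Theorem \ref{ultrametrics} the space $\mathcal{U}_n=\Trop(L_n)$ is a tropical linear space, hence tropically convex, so $\mathcal P\subseteq\mathcal{U}_n$ and every point of every cell is an ultrametric. The reduction of ``same tree topology'' to ``same nested family of threshold clusters,'' and the recursion that peels off the root split via the set $S(z)$ of maximum-attaining coordinates and then restricts to each cluster, are also sound in principle.

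The genuine gap is the one you flag yourself: the central claim that $S(z)$ (and, at every level of the recursion, its analogue inside each cluster) is constant across a cell is never proved. The sketch ``if $z_{cd}$ climbs to meet $M$ then the three-point condition forces $z_{ce}=M$ or $z_{de}=M$, and I would argue this changes the type $Q$'' is not yet an argument, because Definition \ref{def:types} says nothing directly about which coordinates of $x$ are maximal: $Q_j$ records which \emph{generators} $i$ satisfy $u^i_j-x_j=\max_\ell(u^i_\ell-x_\ell)$. To tie a crossing of two coordinates of $x$ to a change of type you need the covector fact that for $x\in\mathcal P$ one has $x_j=\max_i(a_i+u^i_j)$ with $a_i=\min_\ell(x_\ell-u^i_\ell)$, so that $i\in Q_j$ exactly when generator $i$ is tight at coordinate $j$; consequently $x_j-x_{j'}=u^i_j-u^i_{j'}$ is \emph{constant} on the cell whenever $Q_j\cap Q_{j'}\neq\emptyset$, and can only vary when $Q_j$ and $Q_{j'}$ are disjoint. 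The missing step is then to show, using that the generators are ultrametrics, that two coordinates drawn from a common triple $\{ij,ik,jk\}$ cannot exchange their argmax pattern while all the sets $Q_\bullet$ stay fixed. You explicitly defer exactly this comparison as ``the technical core,'' so as written the proposal is an outline whose key lemma is absent rather than a complete proof.
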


Now it is natural to ask the following question:
\begin{problem}
How do tree topologies change if a tropical geodesic crosses between two cells on a tropical polytope $\mathcal P$ over $\mathcal{U}_n$?
\end{problem}

\section{Drawing a Tropical Line Segment on $\mathcal{U}_n$}

In this section we interpret the algorithm to compute a tropical line segment in \cite{MS}.  
In order to compute the tropical line segment between equidistant trees $T_1$ and $T_2$ with leaves $X$, we adapt the algorithm shown in the proof of Proposition 5.2.5 in \cite{MS}.  Note that Proposition 5.2.5 uses the min-plus algebra, whereas we are using the max-plus algebra.  Recall that we use the max-plus algebra because the tree space is a tropical Grassmannian with the max-plus algebra \cite{SS}.

Let $\Gamma_{u, v}$ be a tropical line segment  between two ultrametrics $u, \, v \in \mathcal{U}_n$.  Suppose $u$ and $v$ are ultrametrics corresponding to equidistant trees $T_u$ and $T_v$, then we use a notation $\Gamma_{T_u, T_v}$ as well.  
First we adapt the algorithm shown in the proof of Proposition 5.2.5 in \cite{MS}.

\begin{algorithm}[H]
\DontPrintSemicolon 
\KwIn{A point $u=(u_{1}, \ldots , u_{e})$ and a point $v=(v_{1}, \ldots , v_{e})$ in $\RR^e /\RR{\bf 1}$}
\KwOut{A tropical line segment  $\Gamma_{u, v}$ between $u$ and $w$ in $\RR^e /\RR{\bf 1}$}
Compute $\lambda = v - u = (v_{1} - u_{1}, \ldots , v_{e} - u_{e})$.\;
Set $L = \emptyset$ and set $y^0 = v$.\;
\For{$i \gets 1$ \textbf{to} $e$}{
    Find the $i$th smallest coordinate in $\lambda$, and denote its value as $\lambda_{i}$.\;
    Set $y^i = \left(\max\{\lambda_{i}+u_{1}, v_{1}\}, \ldots ,  \max\{\lambda_{i}+u_{e}, v_{e}\}\right)$.\;
    Set $L = L \cup \{y^i\}$.\;
}
Set $y^{e+1} = u$.\;
\Return{the line segments of lines from $y^i$ to $y^{i+1}$ in $L$ for $i = 0, 1, \ldots , e$.}\;
\caption{Tropical line segment  in $\RR^e /\RR{\bf 1}$}\label{alg:trop_line0}
\end{algorithm}

Now the following algorithm is to compute a tropical line segment between ultrametrics in $\mathcal{U}_n$ modified from Algorithm \ref{alg:trop_line0}:

\begin{algorithm}[H]
\DontPrintSemicolon 
\KwIn{An ultrametric $u=(u_{12}, \ldots , u_{n-1 n})$ computed from an equidistant tree $T_1$ with $n$ leaves and an ultrametric $v=(v_{12}, \ldots , v_{n-1n})$ computed from an equidistant tree $T_2$ with $n$ leaves}
\KwOut{A tropical line segment  $\Gamma_{u, v}$ between $u$ and $w$ in $\mathcal{U}_n$}
Compute $\lambda = v - u = (v_{12} - u_{12}, \ldots , v_{n-1n} - u_{n-1 n})$.\;
Set $L = \emptyset$ and set $y^0 = v$.\;
\For{$i \gets 1$ \textbf{to} ${n \choose 2}$}{
    Find the $i$th smallest coordinate $(i_1, i_2)$ in $\lambda$, and denote its value as $\lambda_{i_1 i_2}$.\;
    Set $y^i = \left(\max\{\lambda_{i_1 i_2}+u_{12}, v_{12}\}, \ldots ,  \max\{\lambda_{i_1 i_2}+u_{n-1 n}, v_{n-1 n}\}\right)$.\;
    Set $L = L \cup \{y^i\}$.\;
}
Set $y^{\binom{n}{2}+1} = u$.\;
\Return{the line segments of lines from $y^i$ to $y^{i+1}$ in $L$ for $i = 0, 1, \ldots , \binom{n}{2}$.}\;
\caption{Tropical line segment  in $\mathcal{U}_n$}\label{alg:trop_line1}
\end{algorithm}

\begin{example}
Suppose we have two points 
\[
v^1 = (0, 0, 0), \, v^2 = (0, 3, 1)
\]
from Example \ref{ex:lineseg1}.
We wish to compute the tropical line segment $\Gamma_{v^1, v^2}$ using Algorithm \ref{alg:trop_line0}.  

First we compute $\lambda = v^2 - v^1 = (0, 3, 1)$.  Then, order elements of $\lambda$ from the smallest to the largest, that is
\[
\lambda = (0, 1, 3).
\]
Then first we compute
\[
\max(\lambda_1 + v^1, v^2) = (\max\{0+0, 0\}, \max\{0+0, 3\}, \max\{0+0, 1\}) = (0, 3, 1).
\]
This is one of the end points of $\Gamma_{v^1, v^2}$.
Then we compute 
\[
\max(\lambda_2 + v^1, v^2) = (\max\{1+0, 0\}, \max\{1+0, 3\}, \max\{1+0, 1\}) = (1, 3, 1) = (0, 2, 0).
\]
This is a point where $\Gamma_{v^1, v^2}$ bends.
Finally, we compute 
\[
\max(\lambda_3 + v^1, v^2) = (\max\{3+0, 0\}, \max\{3+0, 3\}, \max\{3+0, 1\}) = (3, 3, 3) = (0, 0, 0).
\]
This is one of the end points of $\Gamma_{v^1, v^2}$.
Thus,  $\Gamma_{v^1, v^2}$ is a line segment 
\[
v^1 = (0, 0, 0), \, (0, 2, 0), \, v^2 = (0, 3, 1).
\]
\end{example}

Here we show some properties of tropical line segments:
\begin{prop}\label{same_tree_topology}
Let  $\Gamma_{u, v}$ be the tropical line segment between ultrametrics $u, v \in \mathcal{U}_n$.  If $x, y$ are in one straight line on $\Gamma_{u, v}$, then $x$ and $y$ have the same tree topology.
\end{prop}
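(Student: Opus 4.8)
The plan is to reduce the statement to Theorem~\ref{thm:topology} by showing that every maximal straight piece of $\Gamma_{u,v}$ lies, in its relative interior, inside a single cell of the tropical polytope $\tconv(u,v)$. By Definition~\ref{def:polytope} the tropical line segment $\Gamma_{u,v}$ is exactly $\tconv(u,v)$, and both generators $u,v$ are ultrametrics, so Theorem~\ref{thm:topology} already guarantees that any two points sharing a type (equivalently, lying in one cell) are ultrametrics with the same tree topology. Hence the entire content of the proposition is the combinatorial claim that the type of a point moving along $\Gamma_{u,v}$ is locally constant on each linear piece and jumps only at the bends produced by Algorithm~\ref{alg:trop_line0}.

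To make this precise I would parameterize the segment following Algorithm~\ref{alg:trop_line0}: for $t\in\mathbb{R}$ set $z(t)=\bigl(\max\{t+u_1,v_1\},\dots,\max\{t+u_e,v_e\}\bigr)$, so that as $t$ ranges over $\mathbb{R}$ the point $z(t)$ sweeps out $\Gamma_{u,v}$ (from $v$ as $t\to-\infty$ to $u$ as $t\to+\infty$), the bends occurring exactly at the values $t=\lambda_\ell:=v_\ell-u_\ell$. The key computation is coordinatewise: one checks $u_\ell-z_\ell(t)=-\max\{t,\lambda_\ell\}$ and $v_\ell-z_\ell(t)=\min\{\lambda_\ell-t,0\}$. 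Consequently, for $t$ strictly between two consecutive bend values one has $\max_\ell\bigl(u_\ell-z_\ell(t)\bigr)=-t$ and $\max_\ell\bigl(v_\ell-z_\ell(t)\bigr)=0$, so by Definition~\ref{def:types} the generator $u$ lies in $Q_j$ precisely when $\lambda_j<t$ and the generator $v$ lies in $Q_j$ precisely when $\lambda_j>t$. On an open interval containing no $\lambda_j$ this membership pattern is independent of $t$; therefore the type $Q$ is constant along the interior of each straight piece, and Theorem~\ref{thm:topology} closes the argument.

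The step I expect to be the main obstacle is the behaviour at the bend points $t=\lambda_j$ themselves. There the index $j$ enters both ``$u\in Q_j$'' and ``$v\in Q_j$'', so a bend point lies in a strictly lower-dimensional cell than the open pieces adjacent to it, and its ultrametric is a degeneration (a tree topology ``with possible zero branch lengths'', in the language of the abstract) rather than literally the same topology as the interior. I would therefore read ``in one straight line'' as lying in the relative interior of a single linear piece, where the type argument applies verbatim; alternatively one can treat the bends by a closure/continuity argument, verifying that collapsing the coordinate(s) with $\lambda_j=t$ only contracts edges and so does not yield a genuinely different topology. Care is also needed when several $\lambda_\ell$ coincide, since then a single bend merges several cell walls at once; this does not affect constancy of the type on the open pieces, but it should be noted so that the enumeration of straight segments matches the sorted list of $\lambda$-values used in Algorithm~\ref{alg:trop_line0}.
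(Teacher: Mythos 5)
Your proposal is correct and takes the same route as the paper: the paper's entire proof is the single sentence that the proposition is a corollary of Theorem~\ref{thm:topology}, and your type computation ($u_\ell - z_\ell(t) = -\max\{t,\lambda_\ell\}$, $v_\ell - z_\ell(t) = \min\{\lambda_\ell - t, 0\}$) supplies exactly the omitted verification that each open linear piece of $\Gamma_{u,v}$ sits inside a single cell of $\tconv(u,v)$. Your caveat about the bend points is also well taken --- Example~\ref{eg:trop_line1} shows a bend point carrying a degenerate topology distinct from either adjacent piece, so ``in one straight line'' must be read on relative interiors (or ``same topology'' read as allowing zero-length edges), a subtlety the paper's one-line proof silently elides.
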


\begin{proof}
This is a corollary of Theorem \ref{thm:topology}.
\end{proof}

\begin{thm}\label{th:uniform}
If we take $x, y$ randomly from $\mathcal{U}_n$ from a uniform distribution on equidistant trees with fixed height of the equidistant trees, then the tropical line segment  $\Gamma_{x, y}$ between $x, y \in  \mathcal{U}_n$ goes through the star tree (the origin in terms of ultrametrics) with probability zero for $n \geq 5$.  
\end{thm}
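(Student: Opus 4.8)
The plan is to reduce the geometric statement to a purely combinatorial condition on the two tree topologies, and then to observe that this condition is simply incompatible with $n\ge 5$ for binary trees, which carry full measure under the sampling. First I would record the defining criterion. Since $\Gamma_{x,y}=\tconv(x,y)$, the star tree (the origin of $\mathbb R^e/\mathbb R\mathbf 1$) lies on $\Gamma_{x,y}$ if and only if there exist $a,b\in\mathbb R$ with $a\odot x\oplus b\odot y=\mathbf 0$ in $\mathbb R^e/\mathbb R\mathbf 1$; unwinding the tropical operations, this says
\[
\max\{a+x_{ij},\, b+y_{ij}\}=c \quad \text{for all } 1\le i<j\le n,
\]
for a single constant $c$, and after subtracting $c$ from $a$ and $b$ we may assume $c=0$.

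Next I would extract the combinatorial content. From $\max\{a+x_{ij},b+y_{ij}\}=0$ for every pair we get $a+x_{ij}\le 0$ and $b+y_{ij}\le 0$ everywhere, with equality in at least one of the two for each pair. I claim this forces each coordinate to attain the maximum of $x$ or of $y$: if some pair $(i,j)$ had both $x_{ij}<\max_{k\ell}x_{k\ell}$ and $y_{ij}<\max_{k\ell}y_{k\ell}$, then $a+x_{ij}=0$ would give $x_{ij}=-a\ge x_{k\ell}$ for all $k,\ell$ (since $a+x_{k\ell}\le 0$), contradicting $x_{ij}<\max x$, and symmetrically $b+y_{ij}=0$ is impossible, so the maximum above could not be attained. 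Hence the set of coordinates maximal in $x$ together with those maximal in $y$ must cover all $\binom{n}{2}$ pairs.

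Then I would translate ``coordinate maximal'' back to the trees. For an equidistant tree, $x_{ij}$ is largest exactly when the most recent common ancestor of $i$ and $j$ is the root, i.e. when $i$ and $j$ lie in different root subtrees. So the covering condition reads: every pair of leaves is separated by the root of $T_x$ or by the root of $T_y$. If $T_x$ is binary with root subtrees $A\sqcup B=\{1,\dots,n\}$, then any two leaves inside $A$ (or inside $B$) are not separated by the root of $T_x$, hence must be separated by the root of $T_y$; writing $C\sqcup D$ for the root subtrees of $T_y$, this means $|A\cap C|\le 1$ and $|A\cap D|\le 1$, so $|A|\le 2$, and likewise $|B|\le 2$, giving $n=|A|+|B|\le 4$. (The case $n=4$ is genuinely realizable, e.g. $A=\{1,2\},B=\{3,4\}$ split by $C=\{1,3\},D=\{2,4\}$, which is why the threshold is sharp.)

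Finally I would assemble the probabilistic conclusion. Under the uniform distribution on equidistant trees of fixed height, the non-binary topologies and the star tree lie on lower-dimensional faces of $\mathcal U_n$ and so occur with probability zero; thus $T_x$ and $T_y$ are binary almost surely. By the previous paragraph, for $n\ge 5$ no pair of binary equidistant trees can satisfy the covering condition, so on this full-measure set the origin never lies on $\Gamma_{x,y}$, giving probability zero. I expect the main obstacle to be the middle step: justifying carefully that the maximum in the displayed equation can only be realized on genuinely root-maximal coordinates (the normalization of $a,b,c$ together with the genericity needed to discard the degenerate star configurations), after which the leaf-counting argument and the measure-zero bookkeeping are routine.
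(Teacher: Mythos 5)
Your proposal is correct and follows essentially the same route as the paper: the paper invokes a lemma stating that the origin lies on $\Gamma_{x,y}$ if and only if $x\oplus y=0$ (equivalently, every pair of leaves attains the maximal distance $2h$ in $x$ or in $y$, i.e.\ is separated by the root of $T_x$ or of $T_y$), and then applies a pigeonhole argument to the four intersections of the two root splits to find, for $n\ge 5$, a pair lying together on one side of both roots. Your derivation of that criterion directly from the definition of the tropical convex hull, and your contrapositive counting $|A\cap C|,|A\cap D|,|B\cap C|,|B\cap D|\le 1\Rightarrow n\le 4$, are just self-contained restatements of the paper's two steps, with the measure-zero restriction to binary trees made explicit.
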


\begin{proof}
Note that the tropical line segment $\Gamma_{x, y} \subset \mathcal{U}_n$ between $x, y \in \mathcal{U}_n$ is a tropical polytope generated by $x, y$.  We will use Lemma 3.3 from \cite{10.1093/bioinformatics/btaa564} which states that the origin is contained in $\Gamma_{x, y} $ if and only if $x\oplus y=0.$
Suppose the trees both have height $h > 0$.

When $n = 3$, if two trees are in the different polyhedral cones, the tropical line segment between them has to go through the origin. 

When $n = 4$, suppose we have two trees with ultrametrics such that
\[
\begin{array}{c}
   x =  (2a_1, 2h, 2h, 2h, 2h, 2h,)  \\
   y =  (2h, 2a_2, 2h, 2h, 2b_2, 2h)
\end{array}
\]
for any $a_1, a_2, b_1, b_2 < h$, then we have $x\oplus y=0.$  So the tropical line segment between them contains the origin for any $a_1, a_2, b_1, b_2 < h$.  

For $n \geq 5$,  consider the two trees $T_1$ and $T_2$ shown in Figure \ref{fig:tree:uniform}. $R_1, R_2, L_1, L_2 \subset [n]:=\{1, \ldots n\}$ such that $L_1, R_1$ partition $[n]$, i.e., $L_1 \cap R_1 = \emptyset$ and $L_1 \cup R_1 = [n]$, and  $L_2, R_2$ partition $[n]$, i.e., $L_2 \cap R_2 = \emptyset$ and $L_2 \cup R_2 = [n]$. Denote $L \sqcup R = [n]$ be a partition $\{R, \, L\}$ of $[n]$ for $R, L \subset [n]$. Suppose $x$ is the ultrametric of $T_1$ and $y$ is the ultrametric of $T_2$.
\begin{figure}
    \centering
    \includegraphics[width=\textwidth]{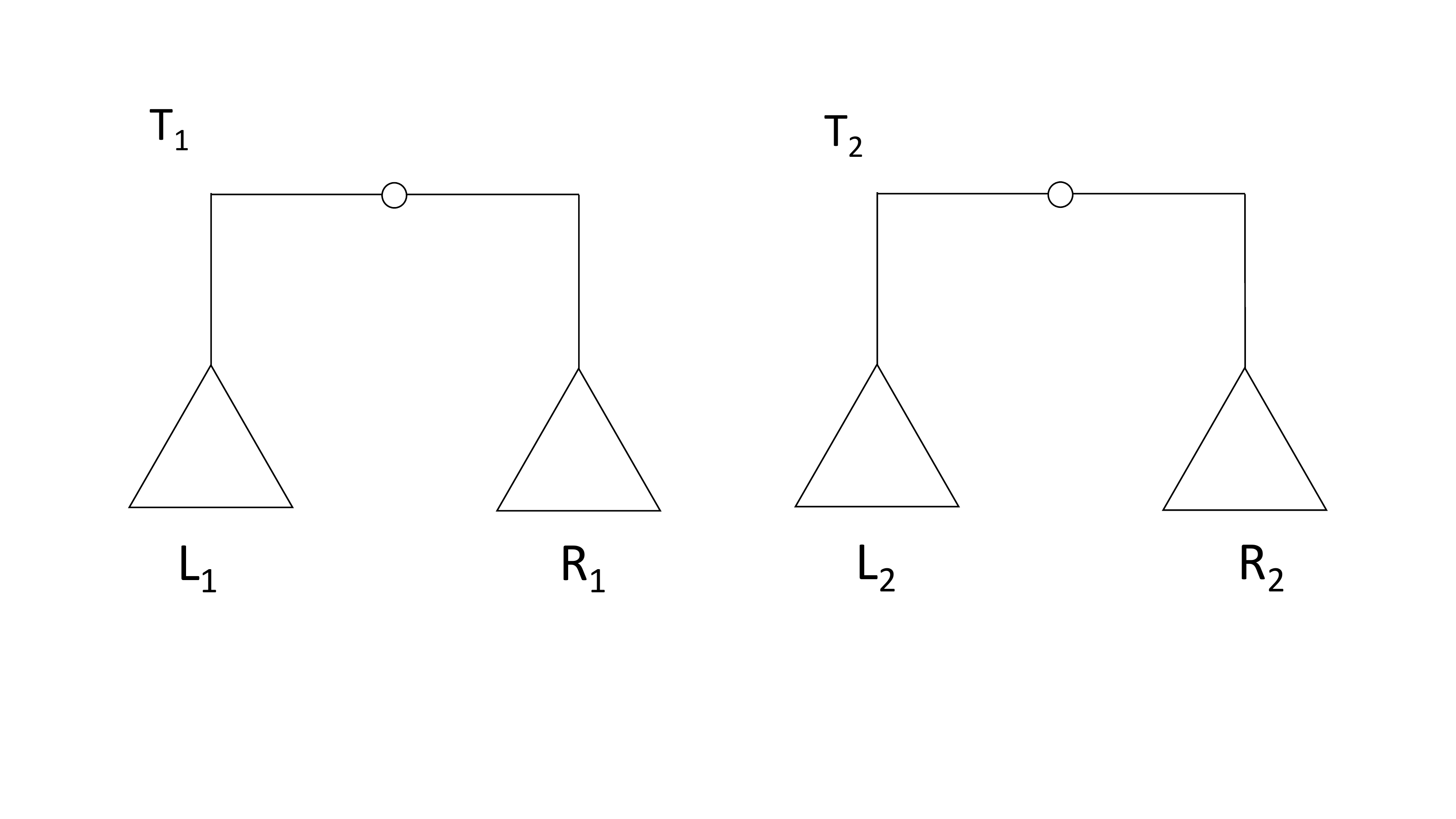}
    \caption{Two trees the proof of Theorem \ref{th:uniform} for $n \geq 5$.}
    \label{fig:tree:uniform}
\end{figure}

Also note that
\[
(L_1 \cap L_2) \sqcup (L_1 \cap R_2) \sqcup (L_2 \cap R_1) \sqcup (L_2 \cap R_2) = [n].
\]
Since $n \geq 5$, therefore, one of the four sets must have at least two elements. Since $n \geq 5$, there exist a pair of two leaves $i, j$ from one of $(L_1 \cap L_2)$, $(L_1 \cap R_2)$, $(L_2 \cap R_1)$, and $(L_2 \cap R_2)$.  Then we have
\[
x_{ij} < 2h, \mbox{ and } y_{ij} < 2h.
\]
Therefore, $(x\oplus y)_{ij}< 2h$.  Therefore, $x\oplus y$ is not the star tree, so by Lemma 3.3 from \cite{10.1093/bioinformatics/btaa564}, the tropical line from $x$ to $y$ does not pass through the origin, i.e., the star tree.
\end{proof}


Now we interpret Algorithm \ref{alg:trop_line1} in terms of equidistant trees.  Before the algorithm to draw a tropical line between two equidistant trees, we have the following definition:
\begin{definition}
Suppose we have an equidistant tree $T$ with $n$ leaves.  An {\em external branch} of $T$ is an edge directly attached to a leaf $i \in X$.
\end{definition}
\begin{definition}
Let $T$ be an equidistant tree with $n$ leaves and let $b_i$ be the external branch lengths of leaf $i$ in $T$  for $i = 1, \ldots , n$. Also let $\alpha T$ be an equidistant tree with $n$ leaves such that the tree topology of $\alpha T$ is the same as the tree topology of $T$ and its external branch length of leaf $i$ is $\alpha/2 + b_i$ for $i = 1, \ldots , n$.
\end{definition}

\begin{algorithm}[H]
\DontPrintSemicolon 
\KwIn{An ultrametric $u=(u_{12}, \ldots , u_{n-1 n})$ computed from an equidistant tree $T_1$ with $n$ leaves and an ultrametric $v=(v_{12}, \ldots , v_{n-1n})$ computed from an equidistant tree $T_2$ with $n$ leaves}
\KwOut{A tropical line segment  $\Gamma_{u, v}$ between $u$ and $v$ in $\mathcal{U}_n$ written a sequence of equidistant trees associated with segments in the tropical line segments}
Compute $\lambda = (\lambda_{12}, \ldots , \lambda_{n-1 n}) = v - u = ((v_{12} - u_{12}), \ldots , (v_{n-1n} - u_{n-1 n}))$.\;
Set $L = \emptyset$ and $T^0 = T_2$.
\For{$i \gets 1$ \textbf{to} ${n \choose 2}$}{
    Find the $i$th smallest coordinate $(i_1, i_2)$ in $(\lambda_{12}, \ldots , \lambda_{n-1 n})$, and denote its value as $\lambda_{i_1 i_2}$.\;
    Initialized $T^i$ as a star tree.\;
    \For{each pair of leaves $(k, \, l)$}{
        Compare the height of the internal node of $(k, \, l)$ in $\lambda_{i_1i_2} T_1$ which equals to $\lambda_{i_1 i_2} + u_{kl}$ with the height of the internal node of $(k, \, l)$ in $T_2$, which equals to $v_{kl}$. \;
        Set an internal node with its height $\max\{\lambda_{i_1 i_2} + u_{kl}, v_{kl}\}$ which is the ancestor of leaves $(k, \, l)$.\;
        Make the height of $T^i$ the same height of $T_1$ and $T_2$ by adjusting all external branch length.\;
    }
    Set $L = L \cup \{T^i\}$.\;
    Set $T^{{n \choose 2}+1}=T_1$.\;
}
\Return{the line segments of lines from $T^i$ to $T^{i+1}$ in $L$ for $i = 0, 1, \ldots , {n\choose 2}$.}\;
\caption{Tropical line segment in $\mathcal{U}_n$ written in terms of equidistant trees}\label{alg:trop_line2}
\end{algorithm}

\begin{example}\label{eg:trop_line1}
The input for Algorithm \ref{alg:trop_line2} $T_1$ and $T_2$ are shown in Fig. \ref{fig:trop_line1}.  Their ultrametrics are 
\[
\begin{array}{rcl}
u&=&(0.4, 0.8, 2, 0.8, 2, 2)\\
v&=&(0.8, 0.8, 2, 0.4, 2, 2)\\
\end{array}
\]

Here $\lambda = v - u = (0.4, 0,0,-0.4,0,0)$.  The tropical line segment between $u$ and $v$ is a line segment consisting of the following ultrametrics:
\[
\begin{array}{rcl}
v&=&(0.8, 0.8, 2, 0.4, 2, 2)\\
&& (0.8, 0.8, 2, 0.8, 2, 2)\\
u&=&(0.4, 0.8, 2, 0.8, 2, 2)\\
\end{array}
\]
We sort the elements in $\lambda$ from the smallest to the largest as
\[
\lambda = (-0.4, 0, 0, 0, 0, 0.4).
\]
For $\lambda_{23} = -0.4$, then the tree topology of the tree associated with the ultrametric $(0.8, 0.8, 2, 0.4, 2, 2)$ is $T_2$.  For $\lambda_{i'j'} = 0$ for $(i', j') = (1, 3), (1, 4), (2, 4), (3, 4)$, then the tree topology associated with the ultrametric $(0.8, 0.8, 2, 0.8, 2, 2)$ is the tree with leaves $1, \, 2, \, 3$ are attached to a single interior node.  When $\lambda_{12} = 0.4$, the  tree topology of the tree associated with the ultrametric $(0.4, 0.8, 2, 0.8, 2, 2)$ is $T_1$.  
\begin{figure}[!h]
  \begin{center}
    \includegraphics[width=9cm]{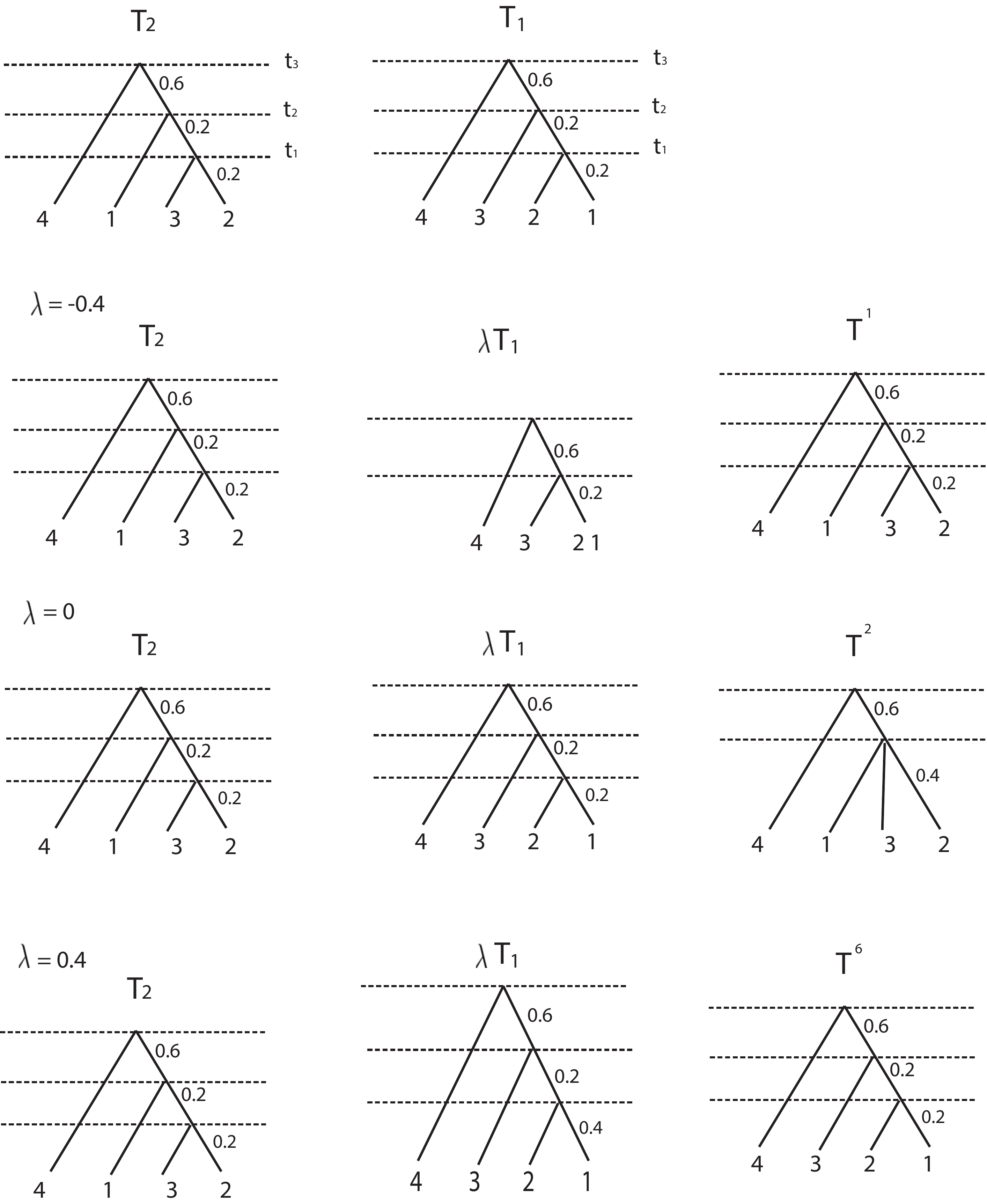}  \qquad
  \end{center}
  \caption{\label{fig:trop_line1}   Trees shown in Example \ref{eg:trop_line1}.}
  \end{figure}
The output from Algorithm \ref{alg:trop_line2} is the set of trees in the left column in Fig.~\ref{fig:trop_line1}.
The tropical line segment between $T_1$ and $T_2$ for this example is a line segment from $T_1$ to $T_2$ via the bending point which is the tree with the ultrametric $(0.8, 0.8, 2, 0.8, 2, 2)$.
\end{example}

\begin{prop}[Proposition 5.2.5 in \cite{MS}]
The  time  complexity  to  compute  the tropical line segment between $T_1$ and $T_2$ with $n$ leaves is $O(n^2\log n) =O(e\log e)$.
\end{prop}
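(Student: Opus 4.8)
The plan is to bound the running time of Algorithm~\ref{alg:trop_line1} (equivalently Algorithm~\ref{alg:trop_line0}) line by line and to show that every step except the determination of the sorted order of the coordinates of $\lambda$ runs in linear time $O(e)$, where $e=\binom{n}{2}$, so that a single sort is the unique bottleneck. The starting point is an algebraic simplification of the breakpoints produced by the algorithm. Writing $\lambda_j=v_j-u_j$, so that $v_j=u_j+\lambda_j$, the $i$th breakpoint satisfies
\[
y^i_j=\max\{\lambda_{(i)}+u_j,\,v_j\}=u_j+\max\{\lambda_{(i)},\,\lambda_j\},
\]
where $\lambda_{(i)}$ denotes the $i$th smallest coordinate of $\lambda$. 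Hence coordinate $j$ equals $v_j$ as long as $\lambda_j\ge\lambda_{(i)}$ and equals $u_j+\lambda_{(i)}$ otherwise, so the entire combinatorial structure of the segment --- the order in which the breakpoints occur and the direction of each linear piece --- is governed solely by the sorted order of the coordinates of $\lambda$.

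Next I would account for the arithmetic. Computing $\lambda=v-u$ in the first line is $O(e)$. The only instruction that asks for more than a single linear scan is the repeated step ``find the $i$th smallest coordinate'' inside the loop: carrying this out for all $i=1,\dots,e$ is exactly the problem of sorting the $e$ coordinates of $\lambda$, which costs $O(e\log e)$ with any comparison sort. After this one sort, the $i$th smallest coordinate and its index are available in $O(1)$ at each iteration.

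The point to make precise is the output model, which is where I expect the main obstacle to lie. If each breakpoint $y^i$ were written out as a full vector in $\RR^e/\RR{\bf 1}$, the output alone would have size $\Theta(e^2)$ and no $O(e\log e)$ bound could hold. The resolution is supplied by the displayed formula: consecutive breakpoints differ only by enlarging the active set of coordinates by one --- the coordinate whose $\lambda$-value is next in sorted order --- together with a uniform shift on that set, so the successive direction vectors are the nested indicator vectors of the sorted coordinates. Thus the segment is fully determined by one endpoint, the sorting permutation of $\lambda$, and the sorted values $\lambda_{(1)}\le\cdots\le\lambda_{(e)}$, a description of size $O(e)$ that is produced in $O(e)$ time once the sort is done. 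Summing the three contributions gives $O(e)+O(e\log e)+O(e)=O(e\log e)$.

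Finally I would translate the bound into the leaf count $n$. Since $e=\binom{n}{2}=\Theta(n^2)$ and therefore $\log e=\Theta(\log n)$, we obtain $O(e\log e)=O(n^2\log n)$, which is the claimed complexity. The step requiring the most care is the third one above: articulating the compact, incremental representation of the tropical line segment and verifying that every operation other than the sort is genuinely linear, so that the $O(e\log e)$ cost of sorting the coordinates of $\lambda$ is indeed the dominant term.
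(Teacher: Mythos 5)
Your proof is correct and follows essentially the same route as the proof of Proposition 5.2.5 in \cite{MS}, which the paper cites without reproducing: a single comparison sort of the coordinates of $\lambda=v-u$ is the bottleneck, and every other step is linear in $e=\binom{n}{2}$, giving $O(e\log e)=O(n^2\log n)$. Your point about the output model is well taken and worth keeping --- Algorithm~\ref{alg:trop_line0} as literally written materializes each breakpoint $y^i$ as a full $e$-vector and hence costs $\Theta(e^2)$, so the stated bound really does require the compact incremental description (one endpoint, the sorting permutation, and the sorted values $\lambda_{(1)}\le\cdots\le\lambda_{(e)}$) that you supply via the identity $y^i_j=u_j+\max\{\lambda_{(i)},\lambda_j\}$.
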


\section{Tree Topologies along a Tropical Line Segment}

Now we consider Problem \ref{prob1}, i.e., tree topologies along with the tropical line segment between two trees.  
In order to solve Problem \ref{prob1}, 
we need to consider relations between an equidistant tree and its ultrametric.  
First we consider when one of the two trees in the end of a tropical line segment is the star tree.  

\begin{definition}
A sequence $\{t_1, \ldots , t_k\}$ is called speciation times in a given equidistant tree $T$ with $n$ leaves if $t_i$ is one half of the $i$th smallest pairwise distance  $d_{ij}$ for any $i, j \in [n]$.
\end{definition}

In terms of an equidistant tree $T$, a speciation time $t_i$ is the height from the leaves to the internal node which is the $i$th smallest branch length from its offspring leaf.

\begin{example}\label{eg:height}
Consider the tree shown in Fig. \ref{fig:height}.  $t_1, \, t_2, \, t_3$ are speciation times. For this tree $t_1 = 0.2$, $t_2 = 0.4$ and $t_3 = 1$.
\begin{figure}[!h]
  \begin{center}
    \includegraphics[width=5.5cm]{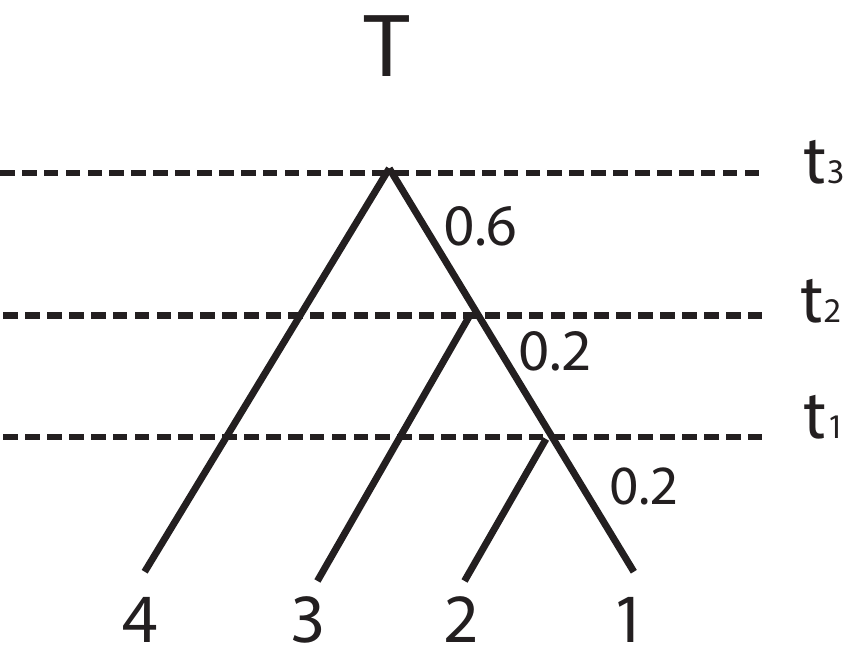}  \qquad
  \end{center}
  \caption{\label{fig:height}   An equidistant tree $T$ for Example \ref{eg:height}.}
  \end{figure}
\end{example}

For an equidistant tree $T$ with $n$ leaves and with a sequence of speciation times $\{t_1, \ldots , t_k\}$, we define a sequence of trees $T^1, \ldots T^k$ associated with the tree $T$, where $k \leq n-1$, such that $T$ and $T^i$ have the same tree topology except that $T^i$ has the sequence of speciation times $\{t_{1}, \ldots , t_k\}$ for $1 \leq i \leq k$.  In general $T^1 = T$.  Also note that $T^k$ is the star tree with $n$ leaves.  

\begin{example}\label{eg:height2}
Consider the tree with $8$ leaves shown in Fig. \ref{fig:height2}.  Then we have $T^2, \, T^3, \, T^4, \, T^5, \, T^6$  as shown in Fig. \ref{fig:height2}. $T^7$ is the star tree.  

The shortest pairwise distance is the distance between leaves 7 and 8.  So $t_1 = 0.2$.  The second shortest pairwise distance is the distance between leaves 3 and 4.  So $t_2 = 0.3$ and by Theorem \ref{trop_line_seg1},  $T^2$ is same as $T$, except the pairwise distance between 7 and 8 which is equal to the pairwise distance between 3 and 4 ($= 2\cdot t_2 = 0.6$).  

The next shortest pairwise distance is the pairwise distance between leaves 5 and 6.  
So, by Theorem \ref{trop_line_seg1}, $T^3$ is same as $T^2$, except the pairwise distances between leaves 7 and 8, between leaves 3 and 4, and between leaves 5 and 6 are equal to $2 \cdot t_3$ where $t_3 = 0.4$.    

Then, the next shortest pairwise distances in $T$ are the pairwise distance between two leaves from $\{2, 3, 4\}$.  So by Theorem \ref{trop_line_seg1}, $T^4$ is same as $T^3$, except leaves 2, 3, and 4 form a polytomy with its height $t_4 = 0.6$, and the pairwise distances between leaves 5 and 6 and between 7 and 8 are $2 \cdot t_4$. 

The next shortest pairwise distances are the pairwise distance between two leaves from $\{5, 6, 7, 8\}$.  So by Theorem \ref{trop_line_seg1}, $T^5$ is same as $T^4$, except leaves 5, 6, 7, and 8 form a polytomy with its height $t_5 = 0.7$ and leaves 2, 3, and 4 form a polytomy with  its height $t_5 = 0.7$.  The next shortest pairwise distances are the pairwise distances between leaves $i$ and $j$ for $i, j \in \{1, 2, 3, 4\}$.  Thus, by Theorem \ref{trop_line_seg1}, $T^6$ is the equidistant tree where the leaves 1, 2, 3, 4 form a polytomy with its height 1.0 and the leaves 5, 6, 7, 8 form a polytomy with its height 1.0.

The tropical line segment from the star tree to $T$ is the line segment of ultrametrics computed from the trees $T, \, T^2, \, T^3, \, T^4, \, T^5, \, T^6$ as shown in Fig.~\ref{fig:height2} and  $T^7$ which is the star tree.

\begin{figure}[!h]
  \begin{center}
    \includegraphics[width=9cm]{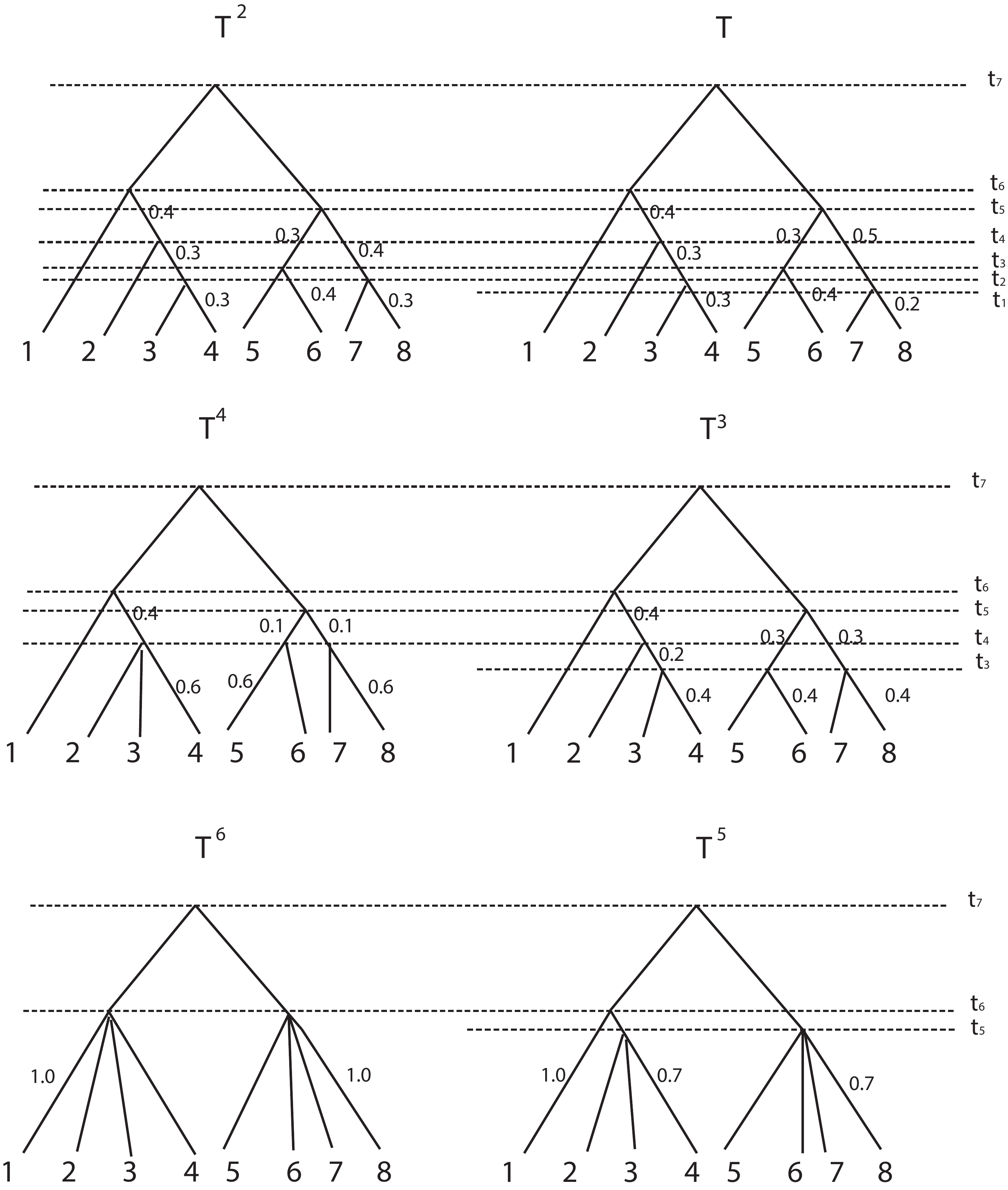}  \qquad
  \end{center}
  \caption{\label{fig:height2}   Trees shown in Example \ref{eg:height2}.}
  \end{figure}
\end{example}

\begin{thm}\label{trop_line_seg1}
Suppose we have an equidistant tree $T$ with $n$ leaves and a sequence of speciation times $\{t_1, \ldots , t_k\}$ where $k \leq n-1$.  Then the tropical line segment from $T$ in $\mathcal{U}_n$ to the origin, i.e., the star tree with $n$ leaves, is the line segments of lines between the ultrametrics of trees $T^{i}, T^{i+1}$, for $i = 1, \ldots , k-1$ associated with the tree $T$.  
\end{thm}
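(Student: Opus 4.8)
The plan is to run Algorithm \ref{alg:trop_line0} (equivalently Algorithm \ref{alg:trop_line1}) directly on this pair of ultrametrics and identify each breakpoint it produces with one of the trees $T^i$. First I would fix the representatives $u = (2 d_T(a,b))_{a<b}$ for the ultrametric of $T$, where $d_T(a,b)$ denotes the height of the most recent common ancestor of leaves $a,b$, and $v = 2h\cdot{\bf 1}$ for the star tree, which is the origin in $\mathbb R^e/\mathbb R{\bf 1}$. Since every $d_T(a,b)$ lies in $\{t_1 < \cdots < t_k = h\}$, the difference vector $\lambda = v - u$ has entries $\lambda_{ab} = 2(h - d_T(a,b))$, whose distinct values, sorted from smallest to largest, are exactly $0 = 2(h - t_k) < 2(h - t_{k-1}) < \cdots < 2(h - t_1)$.

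Next I would compute the breakpoint the algorithm assigns to the threshold value $2(h - t_m)$. Componentwise it is $\max\{2(h-t_m) + 2 d_T(a,b),\, 2h\} = 2h + 2\max\{d_T(a,b) - t_m,\, 0\}$, and after subtracting the constant $2h$ (which is free in $\mathbb R^e/\mathbb R{\bf 1}$) and using the identity $\max\{d_T(a,b) - t_m,\, 0\} = \max\{d_T(a,b),\, t_m\} - t_m$, this equals, projectively, the vector $(2\max\{d_T(a,b),\, t_m\})_{a<b}$. The key observation is that this is precisely the ultrametric of $T^m$: raising every internal node of $T$ that sits below height $t_m$ up to $t_m$ replaces each ancestor height $d_T(a,b)$ by $\max\{d_T(a,b),\, t_m\}$, which is the defining operation producing $T^m$. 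Hence the breakpoint at threshold $2(h-t_m)$ is the ultrametric of $T^m$; in particular $m=k$ gives the star tree and $m=1$ gives $T$ itself.

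I would then handle the bookkeeping of the algorithm. Because the breakpoint depends only on the threshold value and not on which coordinate realizes it, all coordinates sharing a common value of $\lambda$ produce the same point; consequently the distinct breakpoints are exactly the ultrametrics of $T^k, T^{k-1}, \ldots, T^1$, appearing in this order as the threshold increases, with the two ends $y^0 = v$ (the star tree $T^k$) and $y^{\binom{n}{2}+1} = u = T^1 = T$ already in this list. Reading the list in the opposite direction gives the claimed sequence $T^1, T^2, \ldots, T^k$, so the tropical line segment is the concatenation of the straight segments joining the consecutive ultrametrics of $T^i$ and $T^{i+1}$ for $i = 1, \ldots, k-1$.

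The main obstacle I anticipate is the middle step: verifying rigorously that the projective point $(2\max\{d_T(a,b),\, t_m\})_{a<b}$ is a genuine ultrametric whose associated equidistant tree is exactly $T^m$ (same topology as $T$ with the lowest speciation levels collapsed to height $t_m$), rather than merely a vector emitted by the algorithm. This amounts to checking the three-point condition for the collapsed heights and confirming that raising all nodes below $t_m$ to height $t_m$ yields a well-defined equidistant tree, together with keeping the projective normalization consistent across all breakpoints. The ties in $\lambda$ (several pairs coalescing at the same speciation time) are a secondary but easily dispatched point, since equal thresholds yield identical breakpoints and therefore create no extra bends.
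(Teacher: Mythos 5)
Your proposal is correct and follows essentially the same route as the paper: run the tropical line segment algorithm with the star tree as one endpoint, observe that the breakpoint at the threshold corresponding to speciation time $t_m$ has coordinates $\max\{2t_m, u_{kl}\}$, and identify that vector as the ultrametric of $T^m$. The only cosmetic differences are that you orient the segment from $T$ toward the star tree (representing it as $2h\cdot{\bf 1}$ rather than the zero vector) and that you are more explicit about the projective normalization and about ties in $\lambda$, both of which the paper leaves implicit.
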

\begin{proof}
Follow Algorithms \ref{alg:trop_line1} and \ref{alg:trop_line2}.  In this case we have $u_{ij} = 0$ for all $i, j \in \{1, 2, \ldots , n\}$.  Thus $\lambda = v$.  
At the {\bf for-loop} in Algorithm  \ref{alg:trop_line1} for the iteration $i$, i.e., the $i$th smallest pairwise distance with the pair $(i_1, i_2)$ in $\lambda = v$
\begin{equation}\label{ultrametric1}
\max\{\lambda_{i_1 i_2}, v_{kl}\} = \max\{v_{i_1i_2}, v_{kl}\} = \begin{cases}
v_{i_1i_2} & \mbox{if } v_{i_1i_2} \geq v_{kl}\\
v_{kl} & \mbox{otherwise,}
\end{cases}
\end{equation}
for all $k, l \in \{1, \ldots , n\}$.  Note that the equidistant tree  corresponds to this ultrametric in \eqref{ultrametric1} is $T^i$.  
\end{proof}

\begin{example}\label{eg:height3}
Consider the tree with $8$ leaves shown in Fig.~\ref{fig:height2}.  We are interested in drawing a tropical line segment from the tree $T$ to the origin, i.e., the star tree with $8$ leaves.  

First we compute $\lambda = u - 0 = u$, where $u$ is the ultrametric of $T$.  Then we order $\lambda$ from the smallest to the largest, i.e.,
\[
\lambda = (2\cdot t_1, \ldots , 2\cdot t_7).
\]
Now, we iterate for each element in $\lambda$ from the smallest to the largest. 

For $2\cdot t_1$, we have 
\[
u_{ij} = \max\{2\cdot t_1, u_{ij}\}
\]
for $i, j \in X$ with $i < j$.  Thus the equidistant tree for $2\cdot t_1$ is $T$.  

For $2\cdot t_2$, we have
\[
u_{ij} = \max\{2\cdot t_2, u_{ij}\}
\]
for $(i, j) \in (X \times X) - \{(7, 8)\}$ and
\[
2\cdot t_2 = \max\{2\cdot t_2, u_{78}\}.
\]
Then, this ultrametric gives the equidistant tree $T^2$ in Fig.~\ref{fig:height2}.

For $2\cdot t_3$, we have
\[
u_{ij} = \max\{2\cdot t_3, u_{ij}\}
\]
for $(i, j) \in (X \times X) - \{(3, 4), (7, 8)\}$ and
\[
\begin{array}{c}
2\cdot t_3 = \max\{2\cdot t_3, u_{34}\},\\
2\cdot t_3 = \max\{2\cdot t_3, u_{78}\}.
\end{array}
\]
Then, this ultrametric gives the equidistant tree $T^3$ in Fig.~\ref{fig:height2}.

For $2\cdot t_4$, we have
\[
u_{ij} = \max\{2\cdot t_4, u_{ij}\}
\]
for $(i, j) \in (X \times X) - \{(3, 4), (5, 6), (7, 8)\}$ and
\[
\begin{array}{c}
2\cdot t_4 = \max\{2\cdot t_4, u_{34}\},\\
2\cdot t_4 = \max\{2\cdot t_4, u_{56}\},\\
2\cdot t_4 = \max\{2\cdot t_4, u_{78}\}.
\end{array}
\]
Then, this ultrametric gives the equidistant tree $T^4$ in Fig.~\ref{fig:height2}.

For $2\cdot t_5$, we have
\[
u_{ij} = \max\{2\cdot t_5, u_{ij}\}
\]
for $(i, j) \in (X \times X) - \{(2, 3), (2, 4), (3, 4), (5, 6), (5, 7), (5, 8), (6, 7), (6, 8), (7, 8)\}$ and
\[
\begin{array}{c}
2\cdot t_5 = \max\{2\cdot t_5, u_{23}\},\\
2\cdot t_5 = \max\{2\cdot t_5, u_{24}\},\\
2\cdot t_5 = \max\{2\cdot t_5, u_{34}\},\\
2\cdot t_5 = \max\{2\cdot t_5, u_{56}\},\\
2\cdot t_5 = \max\{2\cdot t_5, u_{57}\},\\
2\cdot t_5 = \max\{2\cdot t_5, u_{58}\},\\
2\cdot t_5 = \max\{2\cdot t_5, u_{67}\},\\
2\cdot t_5 = \max\{2\cdot t_5, u_{68}\},\\
2\cdot t_5 = \max\{2\cdot t_5, u_{78}\}.
\end{array}
\]
Then, this ultrametric gives the equidistant tree $T^5$ in Fig.~\ref{fig:height2}.

For $2\cdot t_6$, we have
\[
u_{ij} = \max\{2\cdot t_5, u_{ij}\}
\]
for $(i, j) \in (\{1, 2, 3, 4\} \times \{5, 6, 7, 8\})$ and
\[
\begin{array}{c}
2\cdot t_6 = \max\{2\cdot t_6, u_{ij}\},\\
2\cdot t_6 = \max\{2\cdot t_6, u_{kl}\},\\
\end{array}
\]
where $(i, j) \in \{1, 2, 3, 4\} \times \{1, 2, 3, 4\}$ and $(k, l) \in \{5, 6, 7, 8\} \times \{5, 6, 7, 8\}$.
Then, this ultrametric gives the equidistant tree $T^6$ in Fig.~\ref{fig:height2}.

Finally, for $2\cdot t_7$, we have
\[
\begin{array}{c}
2\cdot t_7 = \max\{2\cdot t_7, u_{ij}\},\\
\end{array}
\]
for all $(i, j) \in X \times X$.  Thus we have the result shown in Fig.~\ref{fig:height2}.

\end{example}

With this theorem, we can solve our problem for $n = 3$.
\begin{lemma}\label{lem3leaves}
Suppose  $T_1, T_2$ are equidistant trees with $n=3$ leaves such that $T_1, T_2$ have different tree topologies.  Then tree topologies along $\Gamma_{T_1, T_2}$ change from the tree topology of $T_1$ to the star tree, and then change from the star tree to the tree topology of $T_2$.
\end{lemma}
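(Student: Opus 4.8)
The plan is to reduce everything to an explicit run of Algorithm~\ref{alg:trop_line1}, exploiting the fact that for $n=3$ the space $\mathcal{U}_3$ has a very simple combinatorial structure. First I would record that a non-star equidistant tree on three leaves is completely determined by its unique cherry: writing an ultrametric as $(u_{12},u_{13},u_{23})$, the three topologies correspond to exactly one coordinate being the strict minimum (the cherry pair), while the star tree is the point with all three coordinates equal, i.e.\ the origin of $\RR^3/\RR\mathbf 1$. In particular, $T_1$ and $T_2$ having different topologies means their ultrametrics attain their strict minimum in different coordinates.

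Next, since the tropical line segment $\Gamma_{T_1,T_2} = \tconv(\{u,v\})$ depends only on the projective classes of the two ultrametrics, I would normalize representatives so that both trees have the same height $h$; this is legitimate because adding a multiple of $\mathbf 1$ to either vector changes neither its projective class nor $\Gamma_{T_1,T_2}$. After relabeling leaves I may assume $T_1$ has cherry $\{1,2\}$ and $T_2$ has cherry $\{1,3\}$, so that $u=(2a,2h,2h)$ and $v=(2h,2b,2h)$ with $a,b<h$.

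The heart of the argument is then a direct computation with Algorithm~\ref{alg:trop_line1} applied to $u$ and $v$. Here $\lambda = v-u = (2(h-a),\,2(b-h),\,0)$, whose sorted order places the negative entry $\lambda_{13}$ first, then $\lambda_{23}=0$, then the positive entry $\lambda_{12}$. Feeding these three thresholds into the update $y^i = \max\{\lambda_i + u,\,v\}$ yields the bend points $y^1 = v$ (the tree $T_2$), $y^2 = \max\{u,v\} = u\oplus v$, and $y^3 = u$ (the tree $T_1$); the middle point $u\oplus v = (2h,2h,2h)$ is exactly the star tree. Thus $\Gamma_{T_1,T_2}$ consists of the straight segment from $T_2$ to the star tree followed by the straight segment from the star tree to $T_1$. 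I would then invoke Proposition~\ref{same_tree_topology}: on each of these two straight pieces the tree topology is constant, and inspecting the coordinates shows the interior of the first piece carries cherry $\{1,3\}$ (topology $T_2$) and the interior of the second carries cherry $\{1,2\}$ (topology $T_1$), the common endpoint being the star tree. Reading the segment starting from $T_1$ gives the statement.

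An equivalent route, closer to the surrounding narrative, is to first show the star tree lies on $\Gamma_{T_1,T_2}$ via Lemma~3.3 of \cite{10.1093/bioinformatics/btaa564} (the origin is in $\Gamma_{u,v}$ iff $u\oplus v$ is the star tree, which for distinct cherries holds because the argmax coordinates of $u$ and of $v$ together cover all of $\{(1,2),(1,3),(2,3)\}$), and then apply Theorem~\ref{trop_line_seg1} to each half $\Gamma_{T_1,\mathrm{star}}$ and $\Gamma_{\mathrm{star},T_2}$; since a three-leaf tree has only two speciation times, each half is a single straight segment of constant topology. I expect the only genuinely delicate point to be the justification that $\Gamma_{T_1,T_2}$ splits as the concatenation $\Gamma_{T_1,\mathrm{star}}\cup\Gamma_{\mathrm{star},T_2}$ once the star tree is known to lie on it, i.e.\ that a sub-arc of a tropical line segment between two of its points is itself the tropical line segment between them. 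This is why I prefer the self-contained algorithmic computation above, where the concatenation through the star tree is produced directly rather than argued after the fact.
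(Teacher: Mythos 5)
Your proposal is correct and follows essentially the same route as the paper: reduce to an explicit run of the line-segment algorithm on normalized ultrametrics, compute the three bend points, and observe that the middle one $u\oplus v$ is the star tree, with constant topology on each of the two straight pieces. The paper phrases the WLOG via inequalities on the coordinates rather than an explicit height normalization, but the computation and conclusion are the same.
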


\begin{proof}
Without loss of generality, $T_1$ and $T_2$ have tree topologies shown in Fig \ref{fig:3leaves}. Let $u^1 = (u^1_{12}, u^1_{13}, u^1_{23})$ be an ultrametric for $T_1$ and $u^2 = (u^2_{12}, u^2_{13}, u^2_{23})$ be an ultrametric for $T_2$.  Then we have
\[
\begin{array}{c}
     u^1_{23} \leq u^1_{12} = u^1_{13}\\
     u^2_{13} \leq u^2_{12} = u^2_{23}.\\
\end{array}
\]
\begin{figure}
    \centering
    \includegraphics[width=0.6\textwidth]{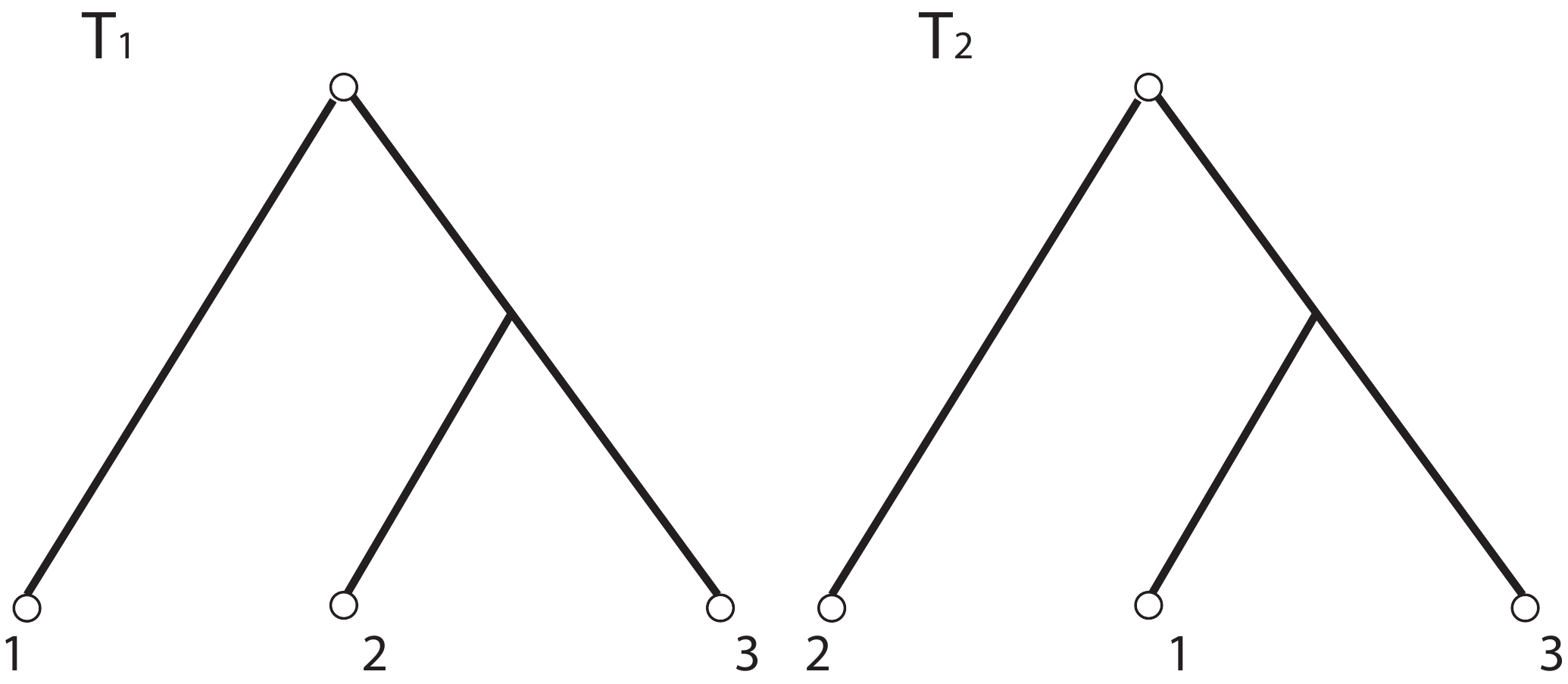}
    \caption{$T_1$ and $T_2$ in the proof of Lemma \ref{lem3leaves}} 
    \label{fig:3leaves}
\end{figure}

Then we have $u^2 - u^1 = (u^2_{12} - u^1_{12}, u^2_{13} - u^1_{13}, u^2_{23} - u^1_{23})$.  Then since all elements in an ultrametric are non-negative, we have
\[
u^2_{13} - u^1_{13} \leq u^2_{12} - u^1_{12} \leq   u^2_{23} - u^1_{23}. 
\]

Using Algorithm \ref{alg:trop_line2},
for $\lambda_{13} = u^2_{13} - u^1_{13}$, then we have $T_2$.  Also for $\lambda_{23} = u^2_{23} - u^1_{23}$, then we have $T_1$.  So we have to see the tree topology if $\lambda_{12} = u^2_{12} - u^1_{12}$. Then we have
\[
\left(
\begin{array}{c}
     \max\{u^1_{12} + u^2_{12} - u^1_{12}, u^2_{12}\}\\
     \max\{u^1_{13} + u^2_{12} - u^1_{12}, u^2_{13}\}\\
     \max\{u^1_{23} + u^2_{12} - u^1_{12}, u^2_{23}\}\\
\end{array}
\right)
= 
\left(
\begin{array}{c}
     u^2_{12}\\
     u^2_{12}\\
     u^2_{23}\\
\end{array}
\right)
=
\left(
\begin{array}{c}
     u^2_{12}\\
     u^2_{12}\\
     u^2_{12}\\
\end{array}
\right)
\]
which is the star tree.  With Theorem \ref{trop_line_seg1} we are done.  
\end{proof}

\begin{remark}
Suppose $T_1, \, T_2 \in \mathcal{U}_3$ with $n = 3$. Then,
the tree topologies along the geodesic between equidistant trees $T_1$ and $T_2$ under the BHV metric are the same as tree topologies along $\Gamma_{T_1, T_2}$.
\end{remark}


\begin{example}\label{ex:topologies}
In this example, we map a tropical line segment between two points in $\mathcal{U}_4$ onto the BHV treespace for rooted trees with $n = 4$ leaves shown in Fig.~\ref{fig:topologies}.  $T_1, T_2$ have the tree topologies $((1, 2), (3,4))$ and $(((1, 2), 3), 4)$  written in the Newick format \cite{newick}.  Note that with this map, we use the tree space coordinate of the BHV metric, but technically this is not the tree space defined by the BHV metric. 
\begin{figure}[h!]
\centering     
\subfigure{\label{fig:a1}\includegraphics[width=0.8\textwidth]{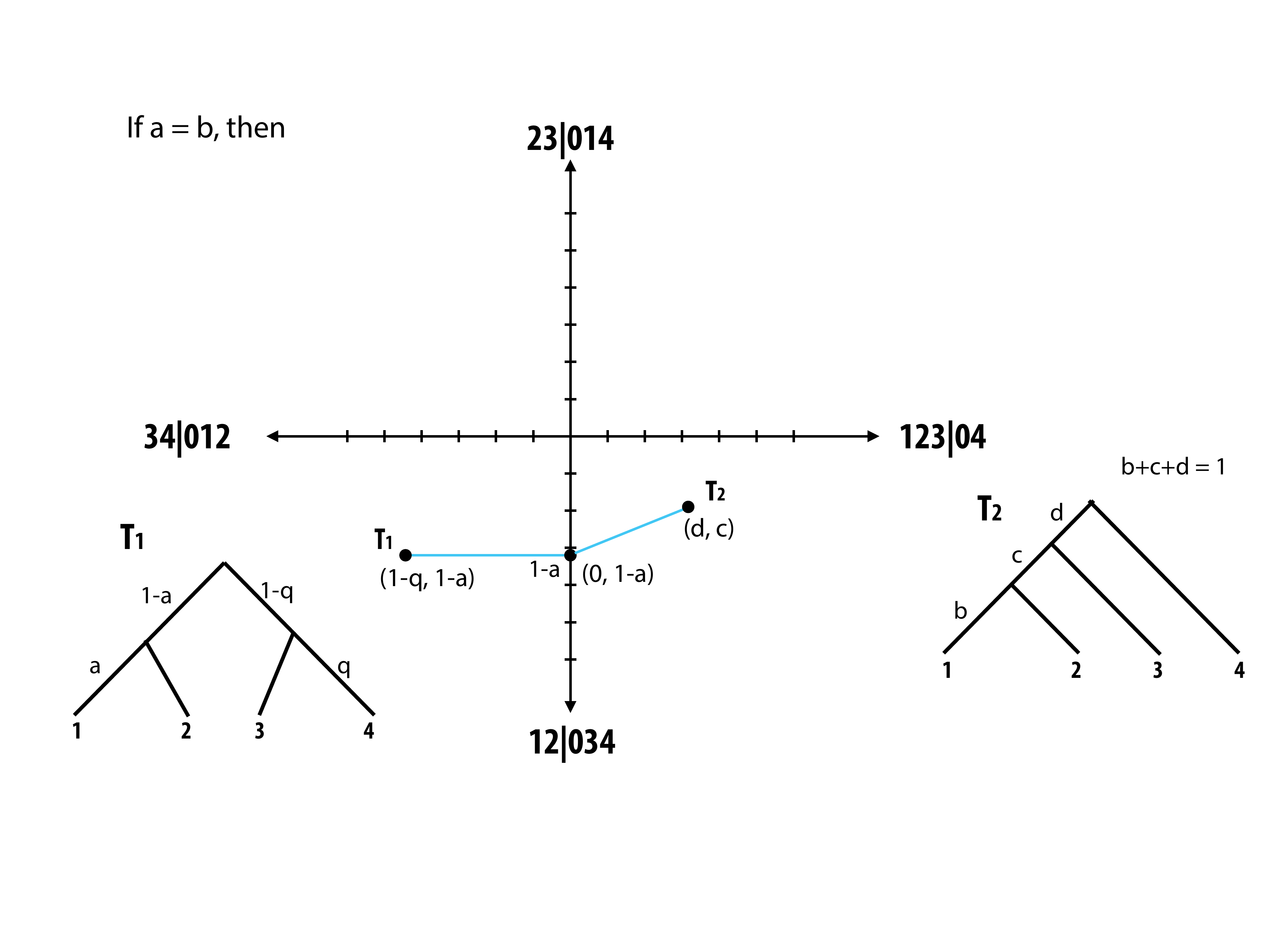}}
\vskip -1in
\subfigure{\label{fig:b1}\includegraphics[width=0.8\textwidth]{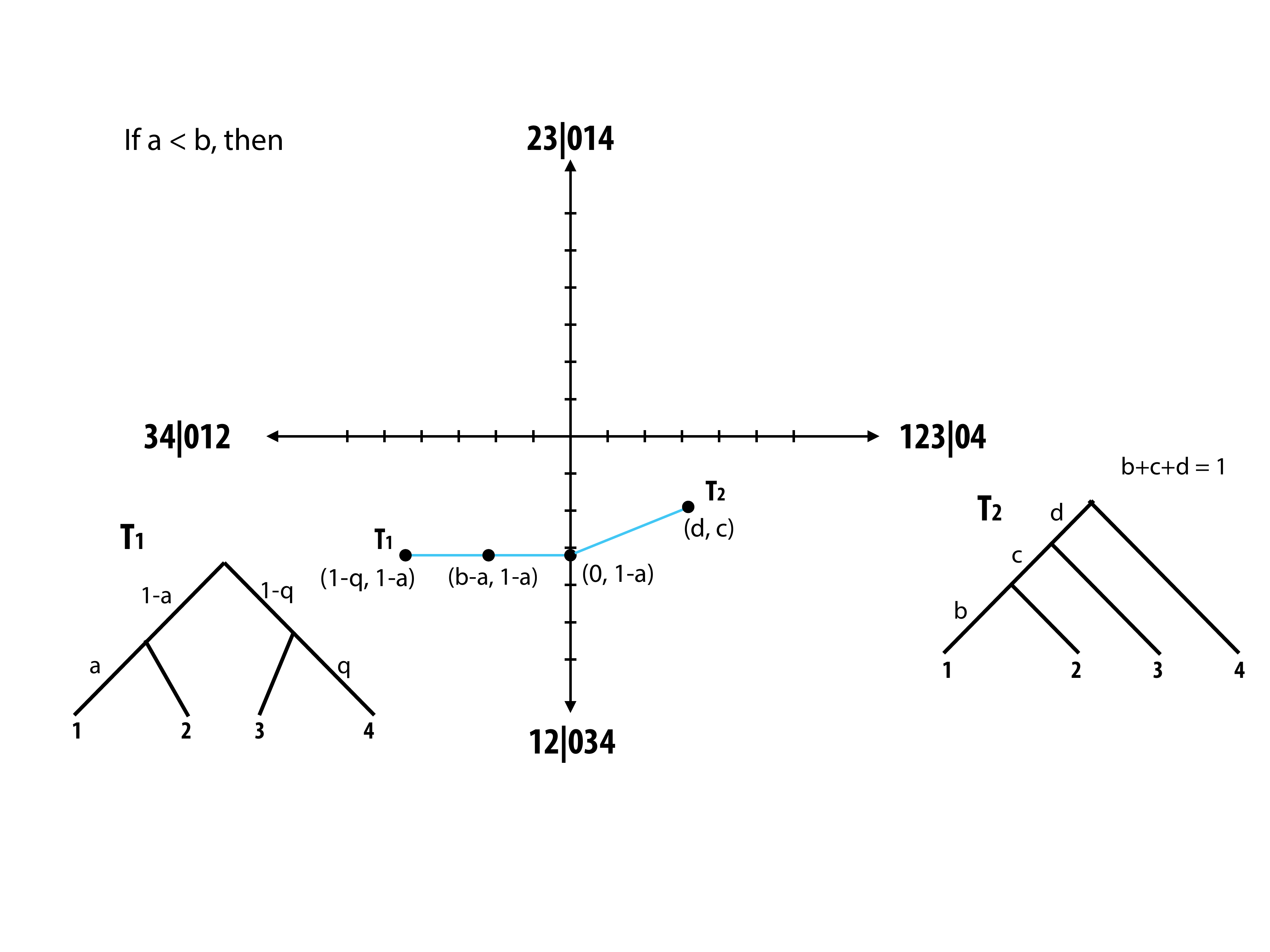}}
\vskip -1in
\subfigure{\label{fig:c1}\includegraphics[width=0.8\textwidth]{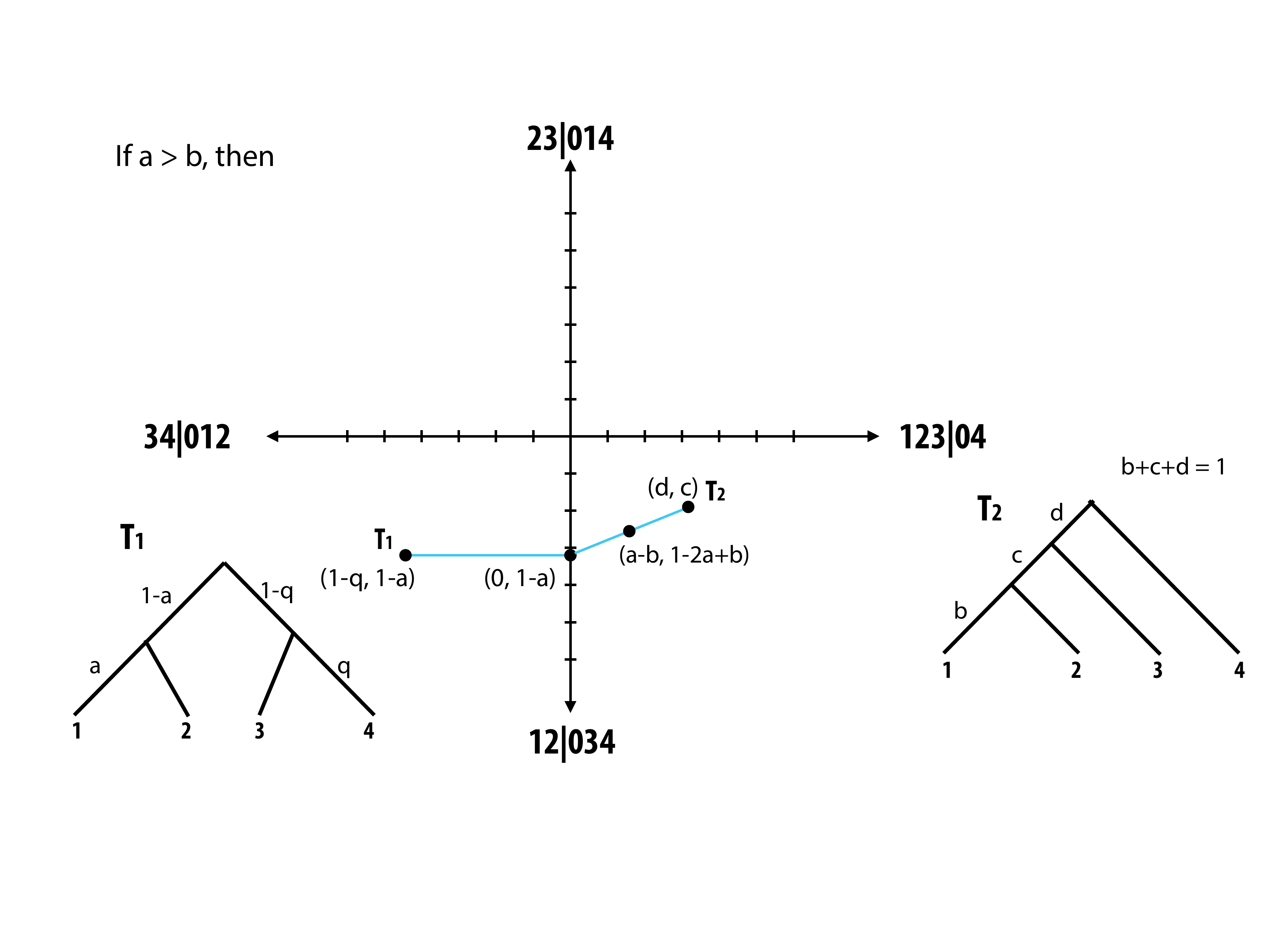}}
\vskip -0.5in
\caption{The tree topologies for the ends of the tropical line segment are $((1, 2), (3,4))$ and $(((1, 2), 3), 4)$ in Example \ref{ex:topologies}.}\label{fig:topologies}
\end{figure}
\end{example}



\begin{definition}
Suppose we have an equidistant phylogenetic tree $T$ with the leave set $X$ and the ultrametric $u = (u_{12}, \ldots , u_{n-1n})$.
A {\em subtree} of $T$ with leaves $X_0 = \{i_1, \ldots , i_{n_0}\} \subset X$, where $n_0 \leq n$, is an equidistant tree constructed from an ultrametric $u^0$ such that
\[
u^0 = (u_{i_1i_2}, \ldots , u_{i_{n_0 - 1}i_{n_0}}).
\]
\end{definition}

\begin{definition}\label{def:clade}
Suppose we have an equidistant phylogenetic tree $T$ with the leave set $X$.  A {\em clade} of $T$ with leaves $X_0 \subset X$ is an equidistant tree constructed from $T$ by adding all common ancestral interior nodes of any combinations of only leaves $X_0$ and excluding common ancestors including any leaf from $X - X_0$   in $T$, and all edges in $T$ connecting to these ancestral interior nodes and leaves  $X_0$.  
\end{definition}

\begin{remark}
Suppose we have an equidistant phylogenetic tree $T$ with the leave set $X$. 
A clade of an equidistant tree $T$ with leave set $X_0 \subset X$ is a subtree of $T$ with the leaves $X_0$.
\end{remark}

\begin{example}\label{ex:subtree1}
Suppose we have two equidistant trees $T_1$ and $T_2$ with the leaf set $\{S_1, S_2, S_3, S_4, S_5\}$ shown in Fig.~\ref{fig:subtree_example}.  Let $X_0 = \{S_1, S_2, S_3\}$.  Suppose we have interior nodes $\{y_1, y_2, y_3, y_4\}$ for $T_1$ such that the root of $T_1$ is $y_4$ and we have interior nodes $\{y_1, y_2, y_3, y_5\}$ for $T_2$ such that the root of $T_2$ is $y_3$. 

$T'_1$ is a clade of $T_1$ with leaves $X_0$ since $y_1$ is a common ancestor of $S_1$ and $S_2$, and $y_2$ is a common ancestor of $S_1$, $S_2$ and $S_3$, but we exclude an interior node $y_3$ from $T_1$ since $y_3$ is a common ancestor of $\{S_1, S_2, S_3, S_4\}$, where $S_4 \not \in X_0$.  

Similarly, $T'_2$ is a clade of $T_2$ with leaves $X_0$ since $y_1$ is a common ancestor of $S_1$ and $S_2$ and $y_2$ is a common ancestor of $S_1$, $S_2$ and $S_3$, but we exclude an interior node $y_3$ from $T_2$ since $y_3$ is a common ancestor of $\{S_1, S_2, S_3, S_4, S_5\}$, where $S_4, S_5 \not \in X_0$.  

Fig.~\ref{fig:subtree_example2} shows ultrametrics $u$ and $v$ associate with equidistant trees $T_1$ and $T_2$, respectively.  Then $u^0$ and $v^0$ are ultrametrics of clades of $T_1$ and $T_2$, respectively, with leaves $X_0$.

\begin{figure}
    \centering
    \includegraphics[width=\textwidth]{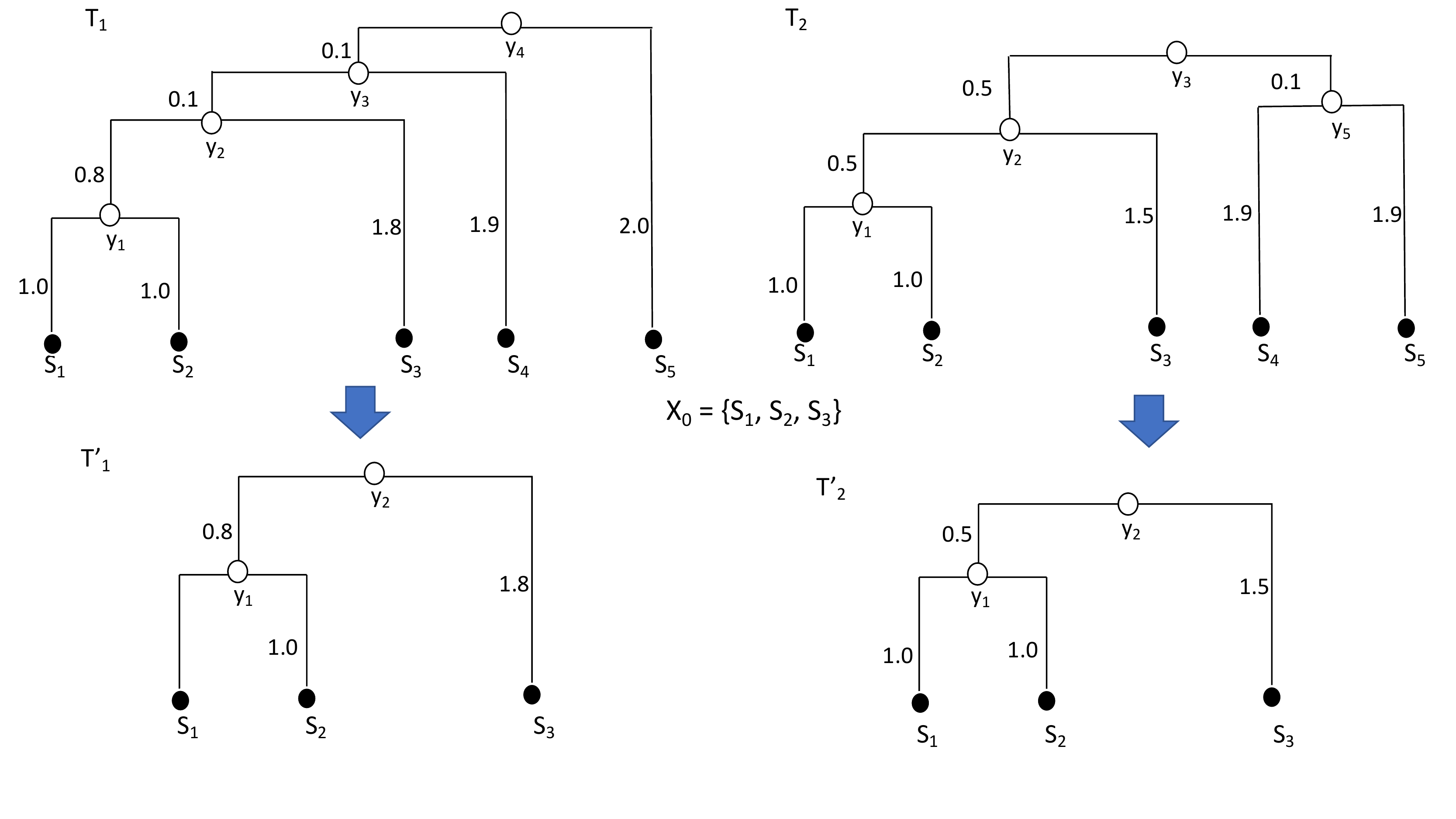}
    \caption{Example of clades in equidistant trees written in Example \ref{ex:subtree1}.}
    \label{fig:subtree_example}
\end{figure}

\begin{figure}
    \centering
    \includegraphics[width=\textwidth]{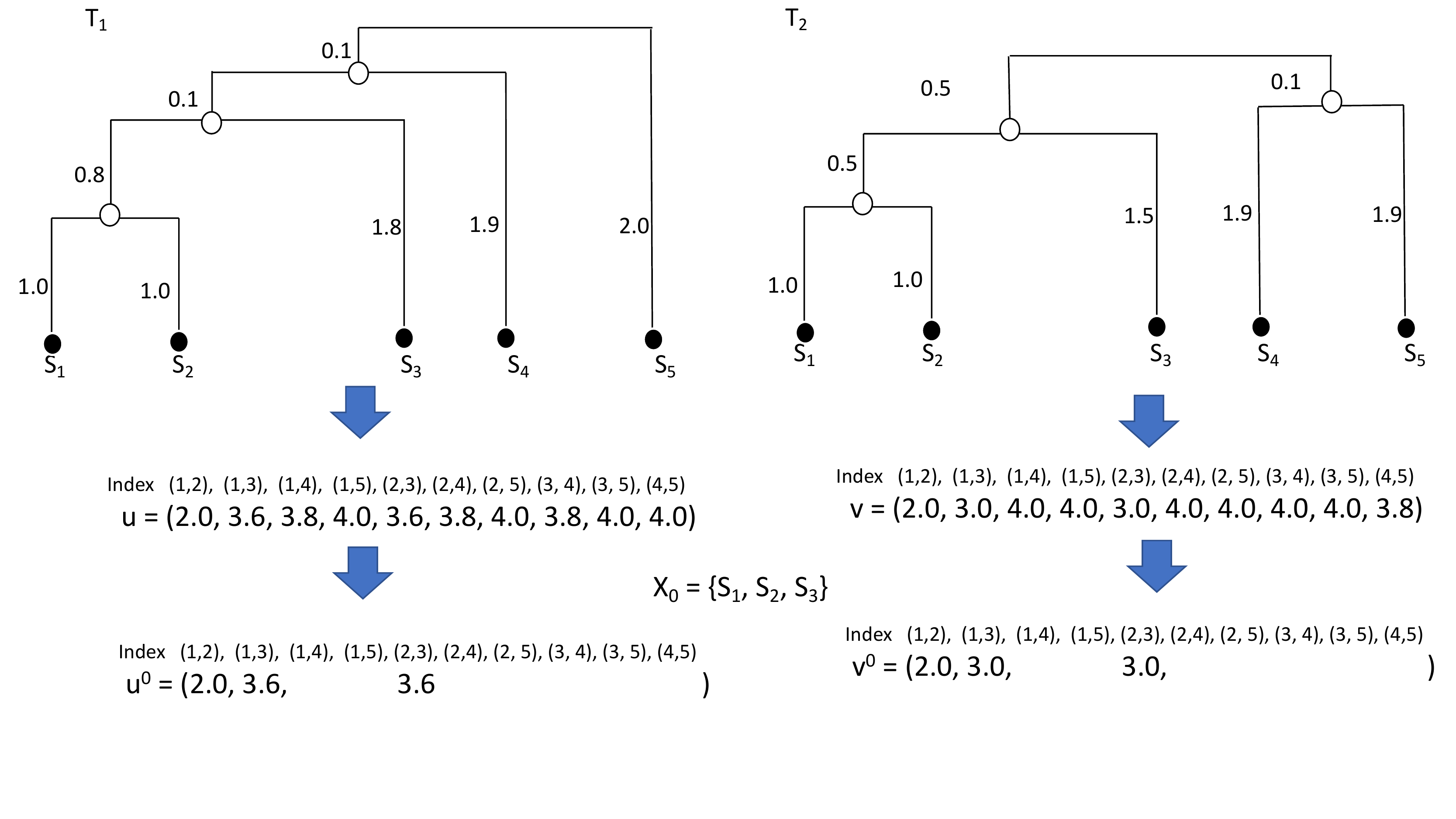}
    \caption{Example of ultrametrics of clades in equidistant trees written in Example \ref{ex:subtree1}. }
    \label{fig:subtree_example2}
\end{figure}
\end{example}

Now we consider the tropical line segment with two trees which share that same tree topology of their clades with leaves $X_0 \subset X$.  In order to see how tree topologies change over the tropical line segment when these two trees share that same tree topology of their clades with leaves $X_0 \subset X$, we 
let 
\[
(\argmax)^i\{x\}
\]
be the set of indices of entries of $x$ achieving the $i$th largest value among entries of a vector $x$.
Then, we have the following lemma.
\begin{lemma}\label{lemma:topology1}
Suppose we have equidistant trees $T_1$ and $T_2$ with their ultrametrics $u^1$ and $u^2$, respectively.  
If
\[
(\argmax)^i\{u^1\} = (\argmax)^i\{u^2\}
\]
for all $0 < i < e$, then $T_1$ and $T_2$ have the same tree topology.
\end{lemma}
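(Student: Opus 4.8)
The plan is to show that the hypothesis forces $u^1$ and $u^2$ to share the same \emph{ordinal structure} on their coordinates, and then to argue that the topology of an equidistant tree is an ordinal invariant of its ultrametric. First I would unpack the condition $(\argmax)^i\{u^1\} = (\argmax)^i\{u^2\}$ for all $0 < i < e$. The sets $(\argmax)^i\{u\}$ partition the index set of the $\binom{n}{2}$ pairs according to the rank of each coordinate: $(\argmax)^1$ collects the pairs of largest distance, $(\argmax)^2$ those of the next largest, and so on. Since these blocks partition the full index set, agreement for $i = 1, \ldots, e-1$ forces agreement of the leftover block as well, so $u^1$ and $u^2$ induce identical ranked partitions of the coordinates. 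Concretely, for any two pairs $p,q$ we get $u^1_p \le u^1_q$ iff $u^2_p \le u^2_q$, and $u^1_p = u^1_q$ iff $u^2_p = u^2_q$.

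Next I would recover the topology from this ordinal data using \emph{sub-level partitions}. For a value $v$, declare $i \equiv_v j$ when $u_{ij} \le v$; the ultrametric condition (equivalently $u_{ik} \le \max\{u_{ij}, u_{jk}\}$) guarantees transitivity, so $\equiv_v$ is an equivalence relation whose classes partition $[n]$. Letting $v$ range over the distinct coordinate values in increasing order yields a nested chain of partitions, and this chain (after forgetting the actual heights) is exactly the tree topology of the equidistant tree. The key observation is that each sub-level set $\{(i,j) : u_{ij} \le v\}$ is a union of the lowest-ranked blocks, namely $\bigcup_{s \ge r} (\argmax)^s\{u\}$ for the appropriate cutoff $r$, so it depends only on the ranked partition and not on the numerical values.

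I would then conclude by combining the two steps: because $u^1$ and $u^2$ have identical ranked partitions, their sub-level sets coincide for every cutoff, hence the induced relations $\equiv_v$ and the resulting nested chains of partitions coincide, and therefore $T_1$ and $T_2$ have the same tree topology. The main obstacle I anticipate is the middle step, namely rigorously justifying that the nested chain of sub-level partitions determines the topology and is genuinely ordinal. I would handle this either by citing the standard correspondence between ultrametrics and dated equidistant trees and noting that passing to the topology discards only the height labels, or, for a self-contained argument, by induction on $n$: peel off the coarsest nontrivial partition (the children of the root, obtained by removing the top threshold), observe that these blocks are determined by an ordinal sub-level set, and apply the inductive hypothesis to each block, using the fact that the restriction of $u$ to a block is again an ultrametric inheriting its ordinal structure from the global one.
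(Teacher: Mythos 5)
Your argument is correct, but it takes a genuinely different route from the paper's. The paper's proof is a two-step reduction: from the hypothesis it extracts only the weaker consequence that $\argmax\{u^1_{ij}, u^1_{ik}, u^1_{jk}\} = \argmax\{u^2_{ij}, u^2_{ik}, u^2_{jk}\}$ for every triple of leaves, and then invokes Theorem 1 of Yoshida's earlier work, which states that this triple-wise argmax data determines the tree topology. You instead keep the full ordinal information --- the identical ranked partition of all $\binom{n}{2}$ coordinates --- and reconstruct the topology directly as the nested chain of sub-level partitions $i \equiv_v j \iff u_{ij} \le v$, using the strong triangle inequality for transitivity. What your approach buys is self-containedness: it does not lean on an external theorem, and it makes transparent exactly which invariant of the ultrametric (the ordinal type of its coordinates) is being used. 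What it costs is the extra step you yourself flag, namely justifying that the nested chain of partitions, with heights forgotten, is the tree topology; your proposed induction on $n$ (peel off the root partition, restrict to blocks, note the restriction is again an ultrametric with the inherited order) closes that gap cleanly. One small point worth making explicit in a final write-up: the definition of $(\argmax)^i$ in the paper refers to the $i$th largest \emph{value}, so when there are ties the blocks for distinct $i$ need not all be nonempty or disjoint under every reading; your observation that agreement of the blocks for $0 < i < e$ forces agreement of the entire ranked partition (including the pattern of ties) is the right way to handle this, and it is in fact slightly stronger than what the paper's proof extracts from the hypothesis.
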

\begin{proof}
Suppose we have
\[
(\argmax)^i\{u^1\} = (\argmax)^i\{u^2\}
\]
for all $0 < i < e$.  Then we have
\[
\argmax\{u^1_{ij}, u^1_{ik}, u^1_{jk}\} = \argmax\{u^2_{ij}, u^2_{ik}, u^2_{jk}\}
\]
where $i, j, k \in X$ are distinct labels of leaves in $T_1$ and $T_2$.  Therefore, $T_1$ and $T_2$ have the same tree topologies by Theorem 1 in \cite{Yoshida}.

\end{proof}

\begin{thm}\label{same_tree_topology2}
Suppose $T_1$, $T_2$ are equidistant trees on $n = |X|$ leaves and $X_0$ is a subset of leaves $X$ which forms a clade in both $T_1$ and $T_2$, with the same tree topology in $T_1$, $T_2$. Then for any tree $T$ on the tropical line segment from $T_1$ to $T_2$, $X_0$ is also a clade of $T$ with the same tree topology as in $T_1$, $T_2$.
\end{thm}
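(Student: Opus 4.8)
The plan is to parametrize the tropical line segment explicitly and show that both the \emph{separation} of $X_0$ from the remaining leaves and the \emph{internal topology} of $X_0$ are invariant along it. Writing $u=u^1$ and $v=u^2$ for the ultrametrics of $T_1,T_2$, every point of $\Gamma_{T_1,T_2}$ is, up to the $\mathbb{R}\mathbf{1}$ shift, of the form
\[
\gamma(\lambda)=\lambda\odot u\oplus v,\qquad \gamma(\lambda)_{ij}=\max\{\lambda+u_{ij},\,v_{ij}\},
\]
for some $\lambda\in\mathbb{R}$, the endpoints being the limits $\lambda\to\pm\infty$. The hypothesis that $X_0$ is a clade with the same topology in $T_1$ and $T_2$ translates into three facts about $u$ and $v$: (separation) $\max_{p,q\in X_0}u_{pq}<u_{ik}$ and $\max_{p,q\in X_0}v_{pq}<v_{ik}$ for all $i\in X_0,\,k\notin X_0$; (equidistance of crossing pairs) $u_{ik}=u_{jk}$ and $v_{ik}=v_{jk}$ for all $i,j\in X_0,\,k\notin X_0$; and (same internal topology) $u|_{X_0}$ and $v|_{X_0}$ induce the same three-point pattern on every triple of $X_0$.

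First I would establish that $X_0$ remains a clade in $\gamma(\lambda)$. The elementary monotonicity fact that $a_1\le b_1$ and $a_2\le b_2$ imply $\max\{a_1,a_2\}\le\max\{b_1,b_2\}$, together with its strict version ($a_1<b_1,\,a_2<b_2\Rightarrow\max\{a_1,a_2\}<\max\{b_1,b_2\}$), does all the work. Applied coordinatewise it gives $\gamma(\lambda)_{ik}=\gamma(\lambda)_{jk}$ directly from $u_{ik}=u_{jk}$ and $v_{ik}=v_{jk}$, and it gives $\gamma(\lambda)_{ij}<\gamma(\lambda)_{ik}$ for $i,j\in X_0,\,k\notin X_0$ from the two separation inequalities (since $u_{ij}\le\max_{p,q\in X_0}u_{pq}<u_{ik}$, and likewise for $v$). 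These are precisely the conditions that the most recent common ancestor of $X_0$ lies strictly below the attachment of every outside leaf and that every outside leaf sees all of $X_0$ as equidistant; hence $X_0$ is a clade of the equidistant tree corresponding to $\gamma(\lambda)$, and its clade ultrametric is $\gamma(\lambda)|_{X_0}$.

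Next I would show that the internal topology of this clade is unchanged, arguing at the level of the three-point (argmax) pattern, which by Theorem 1 in \cite{Yoshida} (as used in Lemma \ref{lemma:topology1}) determines the tree topology. Fix a triple $i,j,k\in X_0$. Because $u$ and $v$ are ultrametrics with the same internal topology, on this triple they lie in the same one of the two admissible patterns: either all three coordinates are equal, or a single coordinate is the strict minimum while the other two, equal to each other, attain the maximum. In the first case $\gamma(\lambda)$ inherits the equality of all three coordinates; in the second, say $u_{ij}<u_{ik}=u_{jk}$ and $v_{ij}<v_{ik}=v_{jk}$, the monotonicity fact yields $\gamma(\lambda)_{ik}=\gamma(\lambda)_{jk}$ and its strict version yields $\gamma(\lambda)_{ij}<\gamma(\lambda)_{ik}$, so the same pattern persists. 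As this holds for every triple in $X_0$, the clade ultrametric $\gamma(\lambda)|_{X_0}$ has exactly the three-point pattern of $u|_{X_0}$, and therefore the clade of $X_0$ in $T$ has the same topology as in $T_1,T_2$.

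The step I expect to require the most care is this internal-topology invariance. The natural temptation is to track the ordering of the coordinates of $\gamma(\lambda)$, but having the same topology does \emph{not} pin down the relative order of the MRCA heights in disjoint parts of the clade — for instance, two cherries can swap which one is shorter between $T_1$ and $T_2$ — so any argument based on preserving a total order would fail. Phrasing the invariant as the three-point/argmax pattern, which is genuinely determined by the topology and is manifestly preserved by a coordinatewise maximum of two inputs sharing that pattern, is what makes the proof go through; the remainder is only the bookkeeping of the two max-monotonicity facts and an appeal to Theorem 1 in \cite{Yoshida}.
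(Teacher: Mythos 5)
Your proof is correct and follows essentially the same route as the paper's: parametrize points of the segment as $\max\{\alpha+u_{ij},v_{ij}\}$, verify the clade (separation) condition, and then check that the three-point pattern on every triple in $X_0$ is preserved. The only difference is cosmetic — where the paper splits into four cases according to the position of $\alpha$ relative to $\lambda_1=v_{ij}-u_{ij}$ and $\lambda_2=v_{ik}-u_{ik}$, you absorb that case analysis into the (strict) monotonicity of coordinatewise maxima, which is a slightly cleaner way to organize the same computation.
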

\begin{proof}
Let $u = (u_{11}, u_{12}, \ldots , u_{n-1n})$ and $v = (v_{11}, v_{12}, \ldots , v_{n-1n})$ be ultrametrics associated to $T_1$ and $T_2$, respectively. 
Let $w = u \odot \alpha \oplus v$ be an ultrametric for any tree $T$ on the tropical line segment from $u$ to $v$.

\begin{enumerate}
        \item The restriction of $T_1$ to the leaves $X_0$ is again an equidistant tree.
        
        \begin{proof}
            This is shown in Definition \ref{def:clade}.
        \end{proof}
        
        \item $X_0$ is a clade of $T$.
        
        \begin{proof}
            A clade is the collection of all descendant leaves of an internal vertex. $X_0$ is a clade of $T$ if for any $i,j \in X_0$ and $k \notin X_0$, we have
            $$w_{ij} < w_{ik} = w_{jk}.$$
            By definition of $w$,
            $$w_{ij} = \max(u_{ij} + \alpha, v_{ij}),$$
            $$w_{ik} = \max(u_{ik} + \alpha, v_{ik}).$$
            Since $X_0$ forms a clade in $T_1$ and $T_2$ by assumption, $v_{ij} < v_{ik}$ and $u_{ij} + \alpha < u_{ik} + \alpha$, so $w_{ij} < w_{ik}$. This proves that $X_0$ forms a clade in every intermediate tree $w$.
        \end{proof}
        
        \item $T\vert_{X_0}$ has the same tree topology as $T_1\vert_{X_0}$ and $T_2\vert_{X_0}$.
        
        \begin{proof}
            Let $i,j,k \in X_0$, and without loss of generality, say
            $$\max(u_{ij}, u_{ik}, u_{jk}) = u_{ik} = u_{jk},$$
            $$\max(v_{ij}, v_{ik}, v_{jk}) = v_{ik} = v_{jk}$$
            (since we assume that $u$, $v$ have the same tree topology when restricted to $X_0$, we know that the indices achieving the max are the same). By definition of $w$, we have
            $$w_{ij} = \max(u_{ij} + \alpha, v_{ij}),$$
            $$w_{ik} = \max(u_{ik} + \alpha, v_{ik}),$$
            $$w_{jk} = \max(u_{jk} + \alpha, v_{jk}).$$
            We want to show that
            $$w_{ij} \leq w_{ik} = w_{jk}$$
            with equality if and only if $u_{ij} = u_{ik} = u_{jk} = v_{ij} = v_{ik} = v_{jk}$ Define:
            $$\lambda_1 = v_{ij} - u_{ij},$$
            $$\lambda_2 = v_{ik} - u_{ik} = v_{jk} - u_{jk}.$$
            There are four cases to consider:
            \begin{enumerate}
                \item \textit{Case 1:} $\lambda_1, \lambda_2 > \alpha$. In this case, $w = v$, and we are done.
                \item \textit{Case 2:} $\lambda_1 \geq \alpha \geq \lambda_2$. In this case,
                \begin{align*}
                    w_{ij} &= \max(u_{ij} + \alpha, v_{ij}) = u_{ij} + \alpha ,\\
                    w_{ik} &= \max(u_{ik} + \alpha, v_{ik}) = v_{ik} ,\\
                    w_{jk} &= \max(u_{jk} + \alpha, v_{jk}) = v_{jk} .
                \end{align*}
                Then
                $$w_{ij} = u_{ij} + \alpha \leq u_{ik} + \alpha \leq v_{ik} = v_{jk} = w_{ik} = w_{jk}$$
                and the leftmost inequality is strict if $u_{ij} < u_{ik} = u_{jk}$, so we are done.
                \item \textit{Case 3:} $\lambda_2 \geq \alpha \geq \lambda_1$. In this case,
                \begin{align*}
                    w_{ij} &= \max(u_{ij} + \alpha, v_{ij}) = v_{ij} ,\\
                    w_{ik} &= \max(u_{ik} + \alpha, v_{ik}) = u_{ik} + \alpha ,\\
                    w_{jk} &= \max(u_{jk} + \alpha, v_{jk}) = u_{jk} + \alpha .
                \end{align*}
                Then
                $$w_{ij} = v_{ij} \leq u_{ij} + \alpha < u_{ik} + \alpha = u_{jk} + \alpha = w_{ik} = w_{jk}$$
                and the rightmost inequality is strict if $u_{ij} < u_{ik} = u_{jk}$, so we are done.
                \item \textit{Case 4:} $\alpha > \lambda_1, \lambda_2$. In this case $w = u$, and we are done.
            \end{enumerate}
        \end{proof}
    \end{enumerate}
This completes the proof.
\end{proof}

\begin{example}\label{ex:subtree2}
Suppose we have equidistant trees $T_1$ and $T_2$ from Example \ref{ex:subtree1}.  Then, the tropical line segment $\Gamma_{T_1, T_2}$ is shown in Fig.~\ref{fig:subtree_example3}.  The tree topology of the clades of trees with leaves $X_0$ on the tropical line segment $\Gamma_{T_1, T_2}$ are the same as these of clades in $T_1$ and $T_2$ with leaves $X_0$.

\begin{figure}
    \centering
    \includegraphics[width=\textwidth]{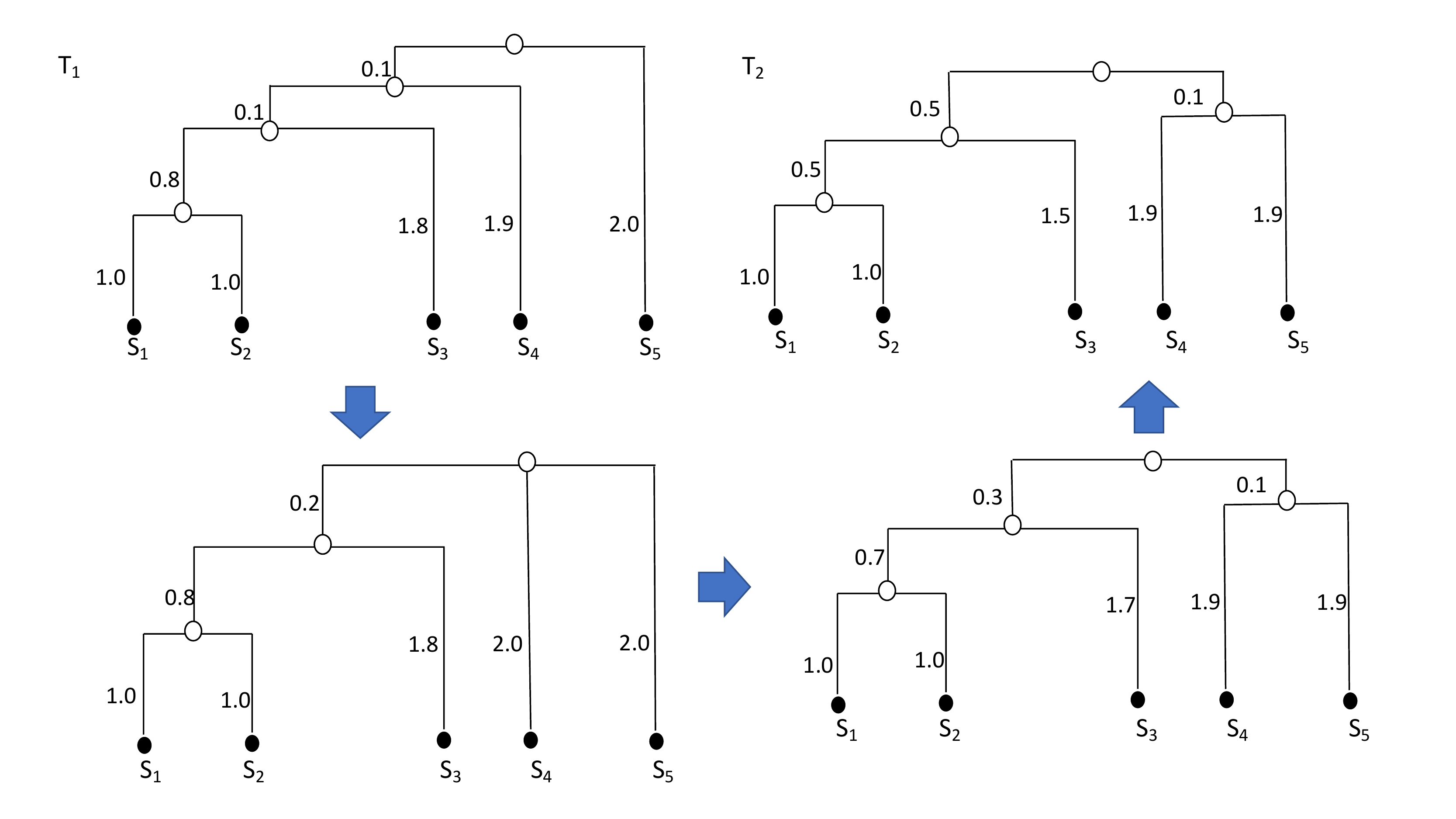}
    \caption{The tropical line segment $\Gamma_{T_1, T_2}$ from $T_1$ and $T_2$ shown in Fig.~\ref{fig:subtree_example}.}
    \label{fig:subtree_example3}
\end{figure}
\end{example}

\begin{definition}
For a rooted phylogenetic tree, a {\em nearest neighbor interchange (NNI)} is an operation of a  phylogenetic tree to change its tree topology by picking three mutually exclusive leaf sets $X_1, X_2, X_3 \subset X$ and changing a tree topology of the clade, possibly the whole tree, consisting with three distinct clades with leaf sets $X_1$, $X_2$, and $X_3$ shown in Fig.~\ref{fig:NNI}.  One NNI move is one of these tree moves shown in Fig.~\ref{fig:NNI}.
\begin{figure}[h!]
    \centering
    \includegraphics[width=\textwidth]{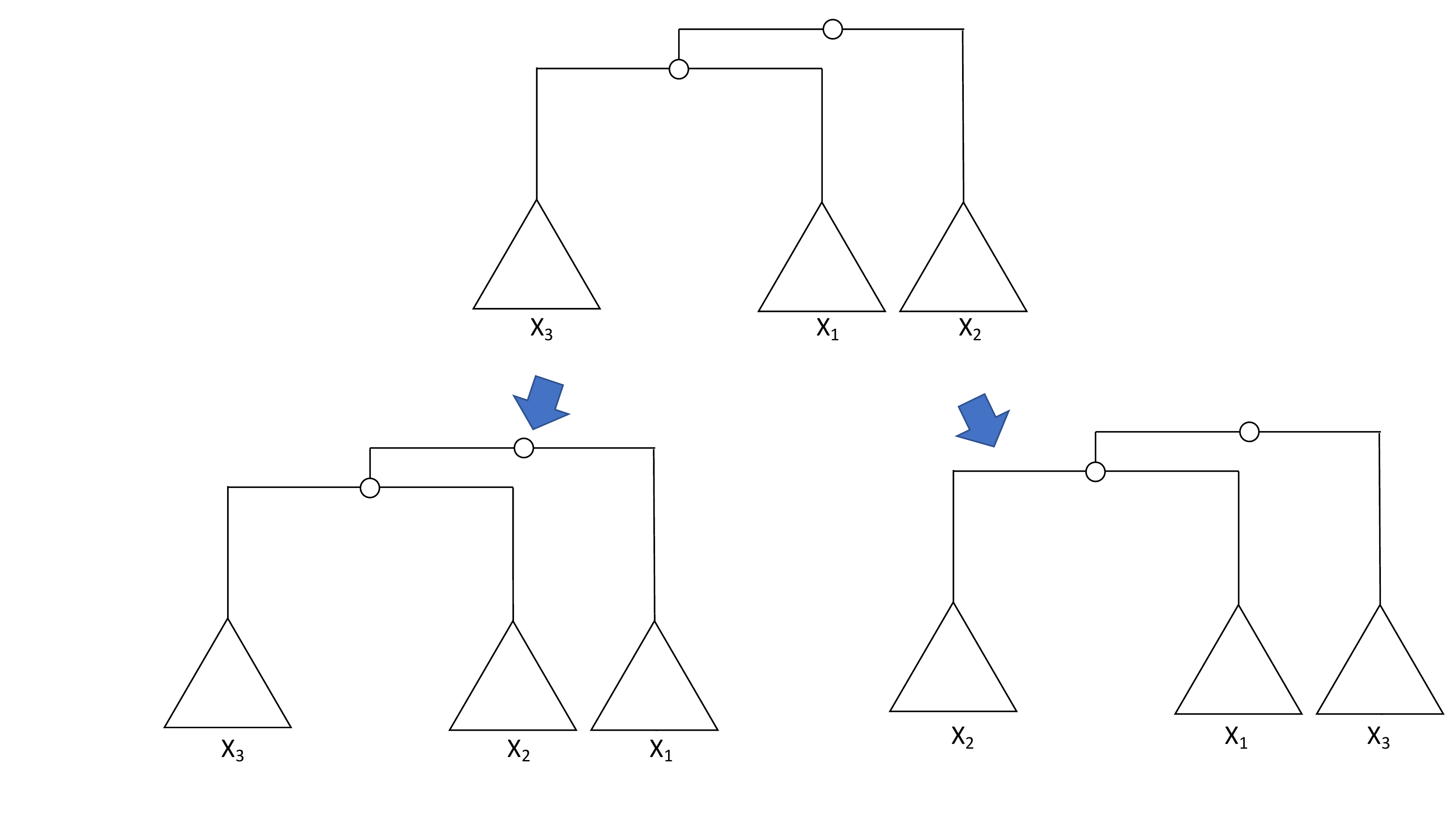}
    \caption{A NNI move is one of these two possible moves.  Triangles with label $\{X_1, X_2, X_3\}$ are clades with leaves $X_1$, $X_2$, and $X_3$, where $X_1, X_2, X_3 \subset X$ are mutually exclusive subsets from $X$.}
    \label{fig:NNI}
\end{figure}
\end{definition}


\begin{thm}\label{th:nni1}
Let $T_1, \, T_2$ be equidistant trees with leaves $X$ such that the tree topology of $T_1$ and the tree topology of $T_2$ are different by only one NNI move.  Then tree topologies on $\Gamma_{T_1, T_2}$ have the same tree topology of $T_1$ or $T_2$ with possible $0$ branch lengths. 
\end{thm}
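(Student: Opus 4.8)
The plan is to parametrize $\Gamma_{T_1,T_2}$ by the construction in Algorithm \ref{alg:trop_line1}, isolate the handful of coordinates in which the two ultrametrics actually differ, and then show by a direct case analysis that only the relative arrangement of the three clades involved in the NNI can change, and that it changes from $T_2$ to $T_1$ through a single polytomy.

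First I would fix notation. Let $u^1,u^2\in\mathcal{U}_n$ be the ultrametrics of $T_1,T_2$. After relabeling we may assume the NNI acts on a clade $D=X_1\cup X_2\cup X_3$, with $T_1$ grouping the three sub-clades as $((X_1,X_2),X_3)$ and $T_2$ as $((X_1,X_3),X_2)$ (the remaining move $((X_2,X_3),X_1)$ is handled by symmetry). Because an NNI is local, $u^1$ and $u^2$ agree in every coordinate except the cross-clade pairs: write $p$ and $q$ for the $T_1$-values, where $p$ is the common value on $X_1\times X_2$ and $q$ the common value on the pairs $X_1\times X_3$ and $X_2\times X_3$, and write $p',q'$ for the $T_2$-values, where $p'$ is the common value on $X_1\times X_3$ and $q'$ the common value on $X_1\times X_2$ and $X_2\times X_3$. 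Since $T_1,T_2$ are resolved one has $p<q$ and $p'<q'$ strictly. Following Algorithm \ref{alg:trop_line1}, every point of $\Gamma_{T_1,T_2}$ is $y(t)=\max(t\odot u^1,\,u^2)$ taken coordinatewise for some $t\in\R$, i.e.\ $y_{kl}(t)=\max(t+u^1_{kl},u^2_{kl})$; as $t$ runs from $-\infty$ to $+\infty$ the point moves from $u^2$ to $u^1$.

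Next I would reduce to the three clades. Since $X_1,X_2,X_3$ are clades with identical internal topology in $T_1$ and $T_2$, Theorem \ref{same_tree_topology2} shows each remains a clade, with that same topology, at every $y(t)$. For every pair with at least one endpoint outside $D$ the coordinate is unchanged, so $y_{kl}(t)$ equals its base value plus $\max(t,0)$; in particular all leaves of $D$ stay equidistant from each outside leaf, which together with $u^1_{ij},u^2_{ij}<$ the $D$-to-outside distance for $i,j\in D$ forces $D$ itself to remain a clade for all $t$. Contracting $D$ to a single point, the induced ultrametric on the quotient is, for every $t$, a fixed vector plus $\max(t,0)\cdot{\bf 1}$, hence constant in the tropical projective torus; so the structure outside $D$ and the way $D$ is nested are identical to those of $T_1$ and $T_2$ throughout. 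Thus the only thing that can vary along $\Gamma_{T_1,T_2}$ is the arrangement of $X_1,X_2,X_3$ inside $D$. That arrangement is decided by the three values $y_{X_1X_2}(t)=\max(t+p,q')$, $y_{X_1X_3}(t)=\max(t+q,p')$, and $y_{X_2X_3}(t)=\max(t+q,q')$: the unique smallest of the three names the pair that clusters first. Their breakpoints are $t_1=p'-q<t_2=q'-q<t_3=q'-p$ (the inequalities use $p<q$, $p'<q'$), and a check of the four resulting intervals gives: for $t<t_2$ the minimum is $y_{X_1X_3}$, i.e.\ the $T_2$ grouping; for $t>t_2$ the minimum is $y_{X_1X_2}$, i.e.\ the $T_1$ grouping; and at $t=t_2$ all three are equal, giving the polytomy in which $X_1,X_2,X_3$ meet at one node (a zero-length internal branch). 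The value $y_{X_2X_3}$ is never the strict minimum, so the third topology $((X_2,X_3),X_1)$ never occurs. Combining this with the reduction above, every tree on $\Gamma_{T_1,T_2}$ has the topology of $T_1$ or of $T_2$, possibly with that one internal branch contracted to length $0$, which is the claim.

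The main obstacle is the reduction step rather than the final case analysis. One has to pin down exactly which coordinates change and verify that the remainder of the tree is genuinely frozen; the subtle point is that Theorem \ref{same_tree_topology2} may be applied to $X_1,X_2,X_3$ but \emph{not} to $D$, whose topology is precisely what changes, so the invariance of the part of the tree outside $D$ must be obtained separately (here via the contraction and projective-triviality argument). A secondary point to be careful about is justifying the strict inequalities $p<q$ and $p'<q'$ from the hypothesis that $T_1$ and $T_2$ are resolved trees differing by a single NNI, and confirming that the four-interval case analysis is exhaustive so that no fourth configuration can slip in.
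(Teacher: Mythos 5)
Your core computation --- the breakpoint analysis of the three cross-clade coordinates $\max(t+p,q')$, $\max(t+q,p')$, $\max(t+q,q')$ with breakpoints $p'-q<q'-q<q'-p$ --- is correct, and it is in substance the same step the paper performs: the paper introduces three dummy leaves $i,j,k$ carrying the common cross-clade values and invokes Lemma \ref{lem3leaves} (topology of $T_1$, then the three-leaf star, then topology of $T_2$), which is exactly your four-interval check with the single polytomy at $t=q'-q$. The genuine problem is in your reduction step, precisely the part you flag as delicate. You assert that ``because an NNI is local, $u^1$ and $u^2$ agree in every coordinate except the cross-clade pairs,'' and you use this again when you claim that every coordinate with an endpoint outside $D$ equals its base value plus $\max(t,0)$, so that the quotient by $D$ is constant in $\RR^e/\RR{\bf 1}$. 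But the hypothesis of Theorem \ref{th:nni1} concerns only the \emph{tree topologies} of $T_1$ and $T_2$; their branch lengths are unconstrained, so $u^1_{kl}$ and $u^2_{kl}$ need not coincide for any pair, inside or outside $D$. With $u^1_{kl}\neq u^2_{kl}$ the formula $y_{kl}(t)=c+\max(t,0)$ and the ensuing ``constant in the tropical projective torus'' conclusion both fail.

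The gap is repairable without changing your architecture. What is actually available (from the clade property of each tree separately, which is also how the paper justifies its dummy leaves) is that for each outside leaf $k$ the value $u^a_{ik}$ is independent of $i\in D$ for $a=1,2$; hence $y_{ik}(t)=\max\{t+u^1_{ik},\,u^2_{ik}\}$ is independent of $i\in D$, $D$ stays a clade, and the induced point on the quotient leaf set $(X\setminus D)\cup\{d\}$ is exactly the tropical line segment between the two quotient ultrametrics. Those two quotient trees have the \emph{same} topology, so Theorem \ref{same_tree_topology2} applied to the whole quotient leaf set (or Lemma \ref{lemma:topology1}) gives the frozen-outside conclusion you want: the quotient segment is not constant, but it is topology-preserving. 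Likewise your normalization should not assume any numerical relation between the $T_1$-values $p,q$ and the $T_2$-values $p',q'$ --- and indeed your case analysis never uses one, so only the prose needs correcting there. Once this is fixed, your treatment of the part of the tree outside $D$ is actually more explicit than the paper's own proof, which handles only the three dummy leaves and is silent on the rest of the tree.
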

\begin{proof}
Suppose $T_1, \, T_2$ are equidistant trees with leaves $X$ such that the tree topology of $T_1$ and the tree topology of $T_2$ differs by only one NNI move.  By the definition of an NNI move, $T_1, \, T_2$ have the same tree topology, except clades (possible full trees) of $T_1$ and $T_2$ differ by one NNI move shown in Fig.~\ref{fig:NNI}.  Let $X_0 = X_1 \cup X_2 \cup X_3$.  Note that the clade $T'_1$ with leaves $X_0$ in $T_1$ and the clade $T'_2$ with leaves $X_0$ in $T_2$ are equidistant trees.  Also clades with leaf sets $X_1$, $X_2$, and $X_3$ in $T_1$ and clades with leaf sets $X_1$, $X_2$, and $X_3$ in $T_2$ are also equidistant trees. 
Let $u = (u_{12}, \ldots , u_{n-1n})$ be an ultrametric associated with the tree $T_1$ and $v=(v_{12}, \ldots , v_{n-1n})$ be an ultrametric associated with the tree $T_2$.  Suppose the tree topology of $T_1$ is the top tree topology in Fig.~\ref{fig:NNI}. 
Then we have
\[
\begin{array}{c}
u_{i_1j_1} = u_{i_2j_2} \\
u_{i_1k_1} = u_{i_2k_2} \\
u_{j_1k_1} = u_{j_2k_2} \\
\end{array}
\]
for any $i_1, i_2 \in X_1$, for any $j_1, j_2 \in X_2$, and for any $k_1, k_2 \in X_3$ since the subtree consisting of $X_1$ and $X_3$ is also a clade so that it is also an equidistant tree with the root which is the most recent common ancestor of $X_1$ and $X_3$.  Thus they also satisfy the condition of ultrametrics such that
\[
\begin{array}{c}
u_{i'k'} \leq u_{i'j'} \\
u_{i'k'} \leq u_{j'k'} \\
u_{i'j'} = u_{j'k'} \\
\end{array}
\]
for any $i' \in X_1$, for any $j' \in X_2$, and for any $k' \in X_3$. 

If the tree topology of $T_2$ is the left bottom in Fig.~\ref{fig:NNI},  then
\[
\begin{array}{c}
v_{i_1j_1} = v_{i_2j_2} \\
v_{i_1k_1} = v_{i_2k_2} \\
v_{j_1k_1} = v_{j_2k_2} \\
\end{array}
\]
for any $i_1, i_2 \in X_1$, for any $j_1, j_2 \in X_2$, and for any $k_1, k_2 \in X_3$ 
since the subtree consisting of $X_1$ and $X_2$ is also a clade so that it is also an equidistant tree with the root which is the most recent common ancestor of $X_1$ and $X_2$.  Thus they also satisfy the condition of ultrametrics such that
\[
\begin{array}{c}
v_{i'j'} \leq v_{i'k'} \\
v_{i'j'} \leq v_{j'k'} \\
v_{i'k'} = v_{j'k'} \\
\end{array}
\]
for any $i' \in X_1$, for any $j' \in X_2$, and for any $k' \in X_3$. 

Similarly, if the tree topology of $T_2$ is the right bottom in Fig.~\ref{fig:NNI}, then
\[
\begin{array}{c}
v_{i_1j_1} = v_{i_2j_2} \\
v_{i_1k_1} = v_{i_2k_2} \\
v_{j_1k_1} = v_{j_2k_2} \\
\end{array}
\]
for any $i_1, i_2 \in X_1$, for any $j_1, j_2 \in X_2$, and for any $k_1, k_2 \in X_3$  
since the subtree consisting of $X_2$ and $X_3$ is also a clade so that it is also an equidistant tree with the root which is the most recent common ancestor of $X_2$ and $X_3$.  Thus they also satisfy the condition of ultrametrics such that
\[
\begin{array}{c}
v_{j'k'} \leq v_{i'k'} \\
v_{j'k'} \leq v_{i'j'} \\
v_{i'j'} = v_{i'k'} \\
\end{array}
\]
for any $i' \in X_1$, for any $j' \in X_2$, and for any $k' \in X_3$. 

Therefore, we set dummy leaves $\{i, j, k\}$ such that
\[
\begin{array}{c}
u_{ij} = u_{i_1j_1} = u_{i_2j_2} \\
u_{ik} = u_{i_1k_1} = u_{i_2k_2} \\
u_{jk} = u_{j_1k_1} = u_{j_2k_2} \\
\end{array}
\]
and
\[
\begin{array}{c}
v_{ij} = v_{i_1j_1} = v_{i_2j_2} \\
v_{ik} = v_{i_1k_1} = v_{i_2k_2} \\
v_{jk} = v_{j_1k_1} = v_{j_2k_2} \\
\end{array}
\]
then we have
\[
\begin{array}{c}
u_{ik} \leq u_{ij} \\
u_{ik} \leq u_{jk} \\
u_{ij} = u_{jk} .\\
\end{array}
\]
If the tree topology of $T_2$ is the left bottom in Fig.~\ref{fig:NNI},  then
\[
\begin{array}{c}
v_{ij} \leq v_{ik} \\
v_{ij} \leq v_{jk} \\
v_{ik} = v_{jk} .\\
\end{array}
\]
If the tree topology of $T_2$ is the right bottom in Fig.~\ref{fig:NNI},  then
\[
\begin{array}{c}
v_{jk} \leq v_{ij} \\
v_{jk} \leq v_{ik} \\
v_{ij} = v_{ik} .\\
\end{array}
\]
These reduce to the trees with three leaves $\{i, j, k\}$.  Therefore, by applying Lemma \ref{lem3leaves}, we have the result. 
\end{proof}

\begin{example}
Consider $T_1$ and $T_2$ in Example \ref{ex:subtree1}. Then, let $X_1 = \{S_1, S_2, S_3\}$, $X_2 = \{S_5\}$ and $X_3 = \{S_4\}$.  Then we notice that 
\[
u_{S_1S_4} = u_{S_2S_4} = u_{S_3S_4} = 3.8,
\]
and
\[
u_{S_1S_5} = u_{S_2S_5} = u_{S_3S_5} = 4.0.
\]
Also 
\[
v_{S_1S_4} = v_{S_2S_4} = v_{S_3S_4} = 4.0,
\]
and
\[
v_{S_1S_5} = v_{S_2S_5} = v_{S_3S_5} = 4.0.
\]
Then we introduce a leaf $i$ such that
\[
u_{iS_4} := u_{S_1S_4} = u_{S_2S_4} = u_{S_3S_4} = 3.8,
\]
\[
u_{iS_5}:= u_{S_1S_5} = u_{S_2S_5} = u_{S_3S_5} = 4.0,
\]
\[
v_{iS_4}:= v_{S_1S_4} = v_{S_2S_4} = v_{S_3S_4} = 4.0,
\]
and
\[
v_{iS_5}= v_{S_1S_5} = v_{S_2S_5} = v_{S_3S_5} = 4.0.
\]
Then we can reduce the tree with five leaves $\{S_1, S_2, S_3, S_4, S_5\}$ to the tree with three leaves $\{i, S_4, S_5\}$.  
\end{example}



\section{Discussion}

This paper is the first step toward understanding combinatorics of tree topologies along a tropical line segment between equidistant trees with $n$ leaves.  There is still so much work to be done in order to understand outputs of statistical learning models using tropical geometry over the space of equidistant trees with $n$ leaves $X$.  
It is still an open problem that we can generalize Theorem \ref{th:nni1}.  More specifically, we have the following conjecture:
\begin{conj}
Suppose we have a tropical line segment $\Gamma_{T_1, T_2}$ between equidistant trees $T_1, T_2$ with $n$ leaves $X$.  Then given tree topologies of $T_1, T_2$, the tree topology changes according to a sequence of NNI moves from $T_1$ to $T_2$ along the tropical line segment $\Gamma_{T_1, T_2}$.
\end{conj}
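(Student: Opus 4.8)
The plan is to parametrize the tropical line segment by the tropical scalar and to reduce the global statement to a local analysis at each bending point, at which I will argue that the tree topology changes by a single NNI move. First I would write the segment as the trajectory $w(\alpha)_{kl} = \max\{u_{kl} + \alpha, v_{kl}\}$ for $\alpha \in \mathbb{R}$, where $u, v$ are the ultrametrics of $T_1, T_2$: as $\alpha \to +\infty$ we recover $T_1$ and as $\alpha \to -\infty$ we recover $T_2$. Each coordinate $w_{kl}(\cdot)$ is a convex piecewise-linear function with a single breakpoint at $\alpha = \lambda_{kl} := v_{kl} - u_{kl}$, flat for $\alpha < \lambda_{kl}$ and of unit slope for $\alpha > \lambda_{kl}$. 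By Proposition \ref{same_tree_topology} the topology is constant on each maximal straight piece of $\Gamma_{T_1, T_2}$, and by Algorithm \ref{alg:trop_line1} these pieces are separated by the (at most $\binom{n}{2}$) breakpoints occurring at the distinct values among the $\lambda_{kl}$. Thus the topologies encountered form a finite sequence $\tau_0 = \tau(T_2), \tau_1, \ldots, \tau_r = \tau(T_1)$, and it suffices to show that each pair of consecutive distinct topologies differs by one NNI move.

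Second, I would peel off the shared structure. Let $X_0 \subseteq X$ be any maximal subset forming a clade with the same topology in both $T_1$ and $T_2$; by Theorem \ref{same_tree_topology2} this clade, together with its topology, persists on all of $\Gamma_{T_1, T_2}$. Contracting every such maximal shared clade to a single dummy leaf — the reduction already used in the proof of Theorem \ref{th:nni1} — reduces the problem to the case where $T_1$ and $T_2$ share no nontrivial clade of matching topology, at the cost of replacing $n$ by a smaller leaf count. I would then induct on this reduced leaf count, with Lemma \ref{lem3leaves} (the $n = 3$ case) and Theorem \ref{th:nni1} (the single-NNI case) serving as base cases.

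The core step is the local analysis at a single breakpoint $\alpha = \lambda^\ast$. Because the coordinate functions are convex and piecewise-linear with unit-slope jumps, as $\alpha$ increases the ranking of the coordinates $w_{kl}(\alpha)$ — equivalently, the agglomerative merge order that determines the ultrametric hierarchy — changes only when a coordinate that has just switched to unit slope overtakes a still-flat coordinate. Generically (distinct $\lambda_{kl}$, and no two coordinate functions crossing simultaneously) each such event is a single adjacent transposition in the merge order. Using the three-point and $\argmax$ characterization of topology underlying Lemma \ref{lemma:topology1} and Theorem 1 in \cite{Yoshida}, I would show that an adjacent transposition which actually alters the topology corresponds to a local regrafting of exactly three adjacent clades, i.e.\ precisely one NNI move, with the intermediate tree at $\alpha = \lambda^\ast$ carrying a zero-length branch (a polytomy), exactly as in the conclusion of Theorem \ref{th:nni1}.

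The hard part will be removing the genericity assumption and controlling ties, where several $\lambda_{kl}$ coincide or several coordinate functions cross at the same $\alpha$: there a single breakpoint may realize several merge-order transpositions at once, and I must show that the net topological change still factors as a sequence of NNI moves rather than a simultaneous, non-NNI reorganization. My plan is to resolve this by a perturbation argument — perturb $u$ and $v$ slightly to place all $\lambda_{kl}$ and all pairwise crossings in general position, apply the generic analysis to obtain an NNI path, and then take the limit, checking that degenerating branch lengths collapse some NNI moves to trivial ones (zero-length branches) while preserving the NNI-step structure of the surviving topology changes. Verifying that this limit never forces a genuinely non-NNI jump is the principal obstacle, and is where the inductive clade-contraction of the second step must be combined carefully with the local picture of the third.
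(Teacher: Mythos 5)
First, a point of order: the statement you are proving is a \emph{conjecture} in this paper. The authors explicitly write that generalizing Theorem \ref{th:nni1} ``is still an open problem'' and offer no proof, so there is no argument of theirs to compare yours against; your proposal has to stand entirely on its own, and as written it does not. Your first two steps are sound and match the paper's toolkit: the parametrization $w(\alpha)_{kl}=\max\{u_{kl}+\alpha,v_{kl}\}$ is exactly the content of Algorithm \ref{alg:trop_line1}, constancy of topology on straight pieces is Proposition \ref{same_tree_topology}, and the clade-contraction is the device from the proof of Theorem \ref{th:nni1} together with Theorem \ref{same_tree_topology2}. But your ``core step'' --- that each topology change at a bend point is a single NNI --- is precisely the conjecture restated locally, and the mechanism you offer for it fails. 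Moreover the induction has no inductive step: after contracting all shared clades, $T_1$ and $T_2$ can still share no nontrivial clade and be arbitrarily far apart in the NNI graph, while Lemma \ref{lem3leaves} and Theorem \ref{th:nni1} only cover NNI-distance at most one; nothing in the proposal reduces the distance.

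The concrete gap is the perturbation argument. You cannot ``perturb $u$ and $v$ slightly to place all $\lambda_{kl}$ in general position,'' because the ties among the $\lambda_{kl}=v_{kl}-u_{kl}$ are structural, not accidental: for an ultrametric, $u_{kl}$ is twice the height of the least common ancestor of $k$ and $l$, so all $\binom{n}{2}$ coordinates take at most $n-1$ distinct values, and every pair separated at the same internal node shares a coordinate --- this holds for \emph{every} point of $\mathcal{U}_n$, including binary trees with generic heights. Hence the ``generic'' regime of distinct breakpoints and single adjacent transpositions is never realized inside $\mathcal{U}_n$: at a bend point an entire family of coordinates switches regime simultaneously, and the resulting topology change can reorganize several clades at once (compare the transitions in Example \ref{eg:height2} via Theorem \ref{trop_line_seg1}, e.g.\ the step producing a four-leaf polytomy on $\{5,6,7,8\}$, which is not the degeneration of a single NNI). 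Perturbing \emph{off} $\mathcal{U}_n$ is not an escape: away from the tropical linear space $\Trop(L_n)$ the points of the segment are no longer ultrametrics, so ``tree topology along the segment'' is undefined and Theorem \ref{thm:topology} does not apply. Finally, the limiting step would need the cell-to-topology assignment to behave continuously under your degeneration, but topologies are locally constant on relatively open cells and the limits land exactly on cell boundaries, which is where the multi-coordinate, potentially non-NNI reorganizations live --- the very case the conjecture is about. So the proposal, while a reasonable program, leaves the essential difficulty (simultaneous coordinate switches forced by ultrametricity) untouched; the statement remains open.
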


\begin{acknowledgements}
R.Y.~is partially supported by NSF (DMS 1916037). Also the author thank the editor and referees for improving this manuscript.  
\end{acknowledgements}

%
%

\bibliographystyle{spmpsci}      

\bibliography{3treespaces}

%
%
Author, Article title, Journal, Volume, page numbers (year)
Author, Book title, page numbers. Publisher, place (year)

\end{document}